\title{\vspace*{-1pc}%
       Dense domains, symmetric operators and spectral triples}
\author{Iain Forsyth\dag, Bram Mesland\S, Adam Rennie\ddag, 
\thanks{email: 
\texttt{iain.forsyth@anu.edu.au}, \texttt{b.mesland@warwick.ac.uk}, \texttt{renniea@uow.edu.au}
}
\\[3pt]
\dag Mathematical Sciences Institute, Australian National University,\\  Canberra, Australia
\\[3pt]
\S Mathematics Institute,
Zeeman Building, University of Warwick,\\
Coventry CV4 7AL, UK
\\[3pt]
\ddag School of Mathematics and Applied Statistics, University of Wollongong\\
Wollongong, Australia\\
}
\newcommand{\bma}{\left(\begin{array}{cc}}
\newcommand{\ema}{\end{array}\right)}
\newcommand{\bca}{\left(\begin{array}{c}}
\newcommand{\eca}{\end{array}\right)}
\def\section{\@startsection{section}{1}{\z@}{-3.5ex plus -1ex minus
  -.2ex}{2.3ex plus .2ex}{\large\bf}}
\def\subsection{\@startsection{subsection}{2}{\z@}{-3.25ex plus -1ex
  minus -.2ex}{1.5ex plus .2ex}{\normalsize\bf}}
\numberwithin{equation}{section} 
\theoremstyle{plain} 
\newtheorem{thm}{Theorem}[section]
\newtheorem{lemma}[thm]{Lemma}
\newtheorem{prop}[thm]{Proposition}
\newtheorem{corl}[thm]{Corollary}
\theoremstyle{definition} 
\newtheorem{defn}[thm]{Definition}
\theoremstyle{remark} 
\newcommand{\A}{\mathcal{A}}  
\newcommand{\B}{\mathcal{B}}  
\newcommand{\C}{\mathbb{C}}   
\newcommand{\D}{\mathcal{D}}  
\renewcommand{\H}{\mathcal{H}}  
\newcommand{\J}{\mathcal{J}}  
\newcommand{\K}{\mathcal{K}}  
\newcommand{\R}{\mathbb{R}}   
\newcommand{\Z}{\mathbb{Z}}   
\newcommand{\stroke}{\mathbin|}     
\def\pairL_#1(#2|#3){{}_{#1}(#2\stroke#3)} 
\def\pairR(#1|#2)_#3{(#1\stroke#2)_{#3}} 
\def\scal<#1|#2>{\langle#1\stroke#2\rangle} 
\newbox\ncintdbox \newbox\ncinttbox 
\newcommand{\de}{\partial}
\newcommand{\Th}{^\text{\textnormal{th}}}
\DeclareMathOperator{\dom}{dom}
\newcommand{\Ideal}[1]{\left\langle#1\right\rangle}
\newcommand{\Ket}[1]{\left|#1\right\rangle}
\newcommand{\Bra}[1]{\left\langle#1\right|}
\DeclareMathOperator{\Span}{span}
\newcommand{\ol}{\overline}
\begin{document}

\maketitle

\vspace{-2pc}

\begin{abstract}
This article is about erroneous attempts to weaken the standard definition of 
unbounded Kasparov module (or spectral triple).  We present counterexamples 
to claims in the literature that Fredholm modules can be obtained from 
these weaker variations of spectral triple. Our counterexamples are constructed using
self-adjoint extensions of symmetric operators.
\end{abstract}

%

\tableofcontents

\parskip=4pt
\parindent=0pt

\addtocontents{toc}{\vspace{-1pc}}

\section{Introduction}
\label{sec:intro}

In this note we show, by counterexample, that weaker definitions of unbounded Kasparov module, and so spectral triple, may not yield $KK$ or $K$-homology classes. In particular, we consider counterexamples arising from extensions of symmetric operators. These counterexamples address
 errors both in \cite[pp 164-165]{Bl}
and subsequent errors in \cite{Valette}.


%

Recently we found that one of the standard texts on $KK$-theory, \cite{Bl}, is overly
ambitious in extending the definition of unbounded Kasparov module. 
The principal requirement of any definition of unbounded Kasparov module 
is that it defines a $KK$-class. This requirement constrains how far the definition
can be extended. The work of Baaj-Julg, \cite{BJ}, provides sufficient conditions for this to be guaranteed. Different conditions apply to the definition of relative Fredholm modules, which can be obtained from symmetric operators, as shown by \cite{BDT}.



The definition of spectral triple that does give a well defined Fredholm module 
reads as follows (see \cite{BJ, CP1} and section 2 of the present paper):
\begin{defn} 
\label{def:ST}
A  
spectral triple $(\A,\H,\D)$ is given by a Hilbert space $\H$, a
$*$-subalgebra  $\A\subset\, \B(\H)$
acting on
$\H$, and a densely defined unbounded self-adjoint operator $\D$  such that:

1. $a\cdot{\rm dom}\,\D\subset {\rm dom}\,\D$ for all $a\in\A$, so that
$[\D,a]$ is densely defined.  Moreover, $[\D,a]$ is bounded on ${\rm dom}\,\D$ and so extends to a bounded operator in
$\B(\H)$ for all $a\in\A$;

2. $a(1+\D^2)^{-1/2}\in\K(\H)$ for all $a\in\A$.
\smallskip

We say that $(\A,\H,\D)$ is even if in addition there is a $\mathbb{Z}_2$-grading
such that $\A$ is even and $\D$ is odd. This means there is an operator $\gamma$ such that
$\gamma=\gamma^*$, $\gamma^2={\rm Id}_\H  
$, $\gamma a=a\gamma$ for all $a\in\A$ and
$\D\gamma+\gamma\D=0$. Otherwise we say that $(\A,\H,\D)$ is odd.
\end{defn}

It is asserted in \cite[pp 164-165]{Bl} that condition 1. of the definition may be weakened to
\begin{enumerate}
\item[1.']\label{wrong}  there is a subspace $Y$ of ${\rm dom}\,\D$ such that $Y$ is dense in $\H$, 
$a\cdot Y\subset{\rm dom}\,\D$, and $[\D,a]$ is bounded on $Y$.
\end{enumerate}
Moreover \cite[Proposition 17.11.3]{Bl} asserts that condition \ref{wrong}' ensures
that $(\A,\H,\D(1+\D^2)^{-1/2})$ is a Fredholm module.
Our first and fourth counterexamples prove that this is false, 
by showing that if the algebra $\A$
does not preserve the domain of $\D$, then the commutators $[\D(1+\D^2)^{-1/2},a]$ need
not be compact, even when $(1+\D^2)^{-1/2}$ is compact. After writing this work, the
paper \cite{Hil} was brought to our attention. In \cite[Section 4]{Hil},
Hilsum provides a $K$-theoretic
contradiction of Blackadar's result.

Unfortunately the problems in \cite{Bl} have contributed to further errors in the literature.
In \cite[Theorems 1.2, 1.3, 6.2]{Valette}, the authors assert
that a Fredholm module can be obtained from any
self-adjoint extension of a symmetric operator $\D$ satisfying certain spectral-triple-like conditions,
\cite[Definition 1.1, Definition 6.3]{Valette}. 
They further claim that
the resulting $K$-homology class is independent of the particular self-adjoint extension.
Both these claims are false, as our counterexamples show.


%
{\bf Acknowledgements}. We would like to thank Alan Carey for useful 
discussions at an early stage of this project. The first and third authors 
were supported by the Australian Research Council, while the 
second author was supported by the EPSRC grant EP/J006580/2.

\section{From spectral triple to Fredholm module}

The idea of the (hard part of the) proof that a spectral triple $(\A,\H,\D)$
defines a Fredholm module, 
due originally to Baaj and Julg, \cite{BJ}, is to write, for $a\in\A$,
\begin{equation}
[\D(1+\D^2)^{-1/2},a]=[\D,a](1+\D^2)^{-1/2}+\D[(1+\D^2)^{-1/2},a].
\label{eq:Leibniz}
\end{equation}
As we want to show that the left hand side is compact, the aim is to show that
both terms on the right are compact. 
For the second term, one writes
$$
(1+\D^2)^{-1/2}=\frac{1}{\pi}\int_0^\infty \lambda^{-1/2}(1+\lambda+\D^2)^{-1}\,d\lambda,
$$
then takes the commutator with $a$ and multiplies by $\D$ yielding
\begin{equation}
\D[(1+\D^2)^{-1/2},a]=\frac{1}{\pi}\D\int_0^\infty \lambda^{-1/2}[(1+\lambda+\D^2)^{-1},a]\,d\lambda.
\label{eq:int-formula}
\end{equation}
A careful analysis of the naive equality
\begin{align}
&\D[(1+\D^2)^{-1/2},a]=\frac{-1}{\pi}\int_0^\infty \lambda^{-1/2}\times\nonumber\\
&\Big(\D^2(1+\lambda+\D^2)^{-1}[\D,a](1+\lambda+\D^2)^{-1}+\D(1+\lambda+\D^2)^{-1}[\D,a]\D(1+\lambda+\D^2)^{-1}\Big)\,d\lambda
\label{eq:naive}
\end{align}
appears in \cite[Lemmas 2.3 and 2.4]{CP1}. 
There, and in the intervening remarks, it is proved
that this equality is valid when $a$ preserves the domain of $\D$.
A similar analysis, employing the Cauchy integral formula, appears in \cite[Proposition 1.1]{BDT}.
The remainder of the proof is to show that the right hand side of Equation \eqref{eq:naive} is
a norm convergent integral with compact integrand, thus showing that the left hand side is compact.

The proof of \cite[Lemma 2.3]{CP1} makes it clear that the equality \eqref{eq:naive}
requires careful domain considerations, and that  \eqref{eq:naive} does not
hold simply for algebraic reasons.

Thus we see that the Baaj-Julg approach to proving compactness
of $[\D(1+\D^2)^{-1/2},a]$ using Equations \eqref{eq:Leibniz} 
and \eqref{eq:int-formula} requires the 
assumption that 
$a$ preserves the domain of 
$\D$. As a slight generalisation, it is asserted in \cite{CP1} that the Baaj-Julg proof 
can be pushed through provided $a$ maps a core for $\D$ into the domain of 
$\D$. We amplify on this in the next Proposition.

\begin{prop}
Let $\D:\dom\D\subset\H\rightarrow\H$ be a 
closed operator, let $X\subset\dom\D$ be a core for $\D$, and let $a\in \B(\H)$ satisfy
\begin{itemize}
\item[(1)]
$a\cdot X\subset\dom\D$, and
\item[(2)]
$[\D,a]:X\rightarrow\H$ is bounded on $X$ and so extends to an operator in $\B(\H)$.
\end{itemize}
Then $a\cdot\dom\D\subset\dom\D$ so that $[\D,a]:\dom\D\to\H$ is well-defined. 
If moreover there is an $\H$-norm dense subspace 
$Y\subset\dom\D^*$ such that $a^*\cdot Y\subset\dom\D^*$, 
then $[\D,a]:\dom\D\rightarrow\H$ extends to an operator in $\B(\H)$.
\end{prop}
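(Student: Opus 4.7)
The plan is in two stages: for the first conclusion I combine a core-approximation argument with closedness of $\D$, while for the second I use the subspace $Y$ to perform a duality calculation. Throughout, let $T \in \B(\H)$ denote the bounded extension of $[\D,a]|_X$ supplied by hypothesis (2). Given any $\xi \in \dom\D$, the core property yields a sequence $x_n \in X$ with $x_n \to \xi$ and $\D x_n \to \D\xi$. Hypothesis (1) gives $a x_n \in \dom\D$, and the identity
\[
\D(a x_n) \;=\; a(\D x_n) + [\D,a] x_n \;=\; a(\D x_n) + T x_n,
\]
valid because $x_n \in X$, shows that the right-hand side converges in $\H$ to $a\D\xi + T\xi$, while simultaneously $a x_n \to a\xi$. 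Closedness of $\D$ then forces $a\xi \in \dom\D$ with $\D(a\xi) = a\D\xi + T\xi$, establishing $a\cdot\dom\D\subset\dom\D$ and hence the well-definedness of $[\D,a]$ on $\dom\D$.

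For the boundedness assertion I invoke the $Y$ hypothesis via an adjoint calculation. For $x \in X$ and $\eta \in Y$, combining $a x \in \dom\D$ with $a^*\eta \in \dom\D^*$ gives
\[
\langle T x, \eta\rangle \;=\; \langle \D(ax), \eta\rangle - \langle a\D x, \eta\rangle \;=\; \langle ax, \D^*\eta\rangle - \langle \D x, a^*\eta\rangle \;=\; -\langle x, [\D^*,a^*]\eta\rangle.
\]
Since $X$ is dense in $\H$, this identifies $T^*\eta = -[\D^*,a^*]\eta$ on $Y$, so $\|[\D^*,a^*]\eta\| \leq \|T\|\,\|\eta\|$ for every $\eta \in Y$. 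Repeating the same manipulation with a general $\xi \in \dom\D$ (using the first part to secure $a\xi \in \dom\D$) yields
\[
|\langle [\D,a]\xi, \eta\rangle| \;=\; |\langle \xi, [\D^*,a^*]\eta\rangle| \;\leq\; \|T\|\,\|\xi\|\,\|\eta\|,
\]
and density of $Y$ in $\H$ converts this into $\|[\D,a]\xi\| \leq \|T\|\,\|\xi\|$, as required.

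The main care point is the graph-norm approximation at the start: one must verify that $\D(ax_n)$ genuinely converges, which is precisely where hypotheses (1) and (2) combine, and that closedness of $\D$ then transfers $a\xi$ into $\dom\D$; the rest is routine adjoint bookkeeping. I note in passing that the core-sequence argument already forces $[\D,a]|_{\dom\D} = T|_{\dom\D}$, so the boundedness conclusion is in fact automatic once the first part is proved; the role of the $Y$-hypothesis is to supply a transparent adjoint-theoretic derivation of the bound that bypasses any second use of the graph-norm approximation.
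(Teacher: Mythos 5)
Your proof is correct and follows the same two-step strategy as the paper: graph-norm approximation from the core combined with closedness of $\D$ for the domain preservation, and the adjoint computation $\langle [\D,a]\xi,\eta\rangle=-\langle\xi,[\D^*,a^*]\eta\rangle$ over $\eta\in Y$ for the boundedness. The one place you genuinely diverge is your closing remark, and you are right: the closedness argument identifies the limit of $\D(ax_n)$, so it yields the identity $\D(a\xi)=a\D\xi+T\xi$ for \emph{every} $\xi\in\dom\D$, hence $[\D,a]=T|_{\dom\D}$ is bounded with no appeal to $Y$ whatsoever --- the hypothesis on $Y$ is redundant for the conclusion as stated. The paper does not record this consequence of its own first paragraph; instead it proves boundedness by showing $[\D,a]$ is closable (since $\dom([\D,a])^*\supset Y$ is dense) and then noting that $\overline{[\D,a]}$ must contain the everywhere-defined bounded operator $\overline{[\D,a]|_X}=T$. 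Your quantitative finish, bounding $\|[\D,a]\xi\|\le\|T\|\,\|\xi\|$ by testing against a dense set of $\eta\in Y$, is a correct variant of that closability argument; what your observation buys is the recognition that neither version of the second step is actually needed.
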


\begin{proof}
Since $X$ is a core for $\D$, it is dense in $\dom\D$ in the graph norm. 
Let $x\in\dom\D$, and choose a sequence $\{x_n\}_{n=1}^\infty\subset X$ 
such that $x_n\rightarrow x$ in the graph norm, which is equivalent to 
$x_n\rightarrow x$ and $\D x_n\rightarrow \D x$ in the usual norm. 
Since $a\in \B(\H)$, $ax_n\rightarrow ax$, and $\{\D ax_n\}_{n=1}^\infty$ 
is Cauchy in the usual norm since
\begin{align*}
\|\D ax_n-\D a x_m\|&=\|a \D x_n - a \D x_m+ [\D,a] x_n-[\D,a] x_m\|\\
&\leq\|a\|\|\D x_n-\D x_m\|+\|[\D,a]\|\|x_n-x_m\|\rightarrow0.
\end{align*}
Hence $\{ax_n\}_{n=1}^\infty$ is Cauchy in the graph norm, and since $\D$ 
is closed,  there is some $y\in\dom\D$ such that $ax_n\rightarrow y$ in the 
graph norm. This implies that $ax_n\rightarrow y$ in the usual norm, and 
since $ax_n\rightarrow ax$ in the usual norm we see that $y=ax$. Hence $ax\in\dom\D$.

Now suppose that $Y\subset\dom\D^*$, $a^*\cdot Y\subset\dom\D^*$. 
To show that $[\D,a]:\dom\D\rightarrow\H$ is bounded, it is enough to show that 
$[\D,a]$ is closeable, since then $\ol{[\D,a]}\supset\ol{[\D,a]|_X}$ 
which is everywhere defined and bounded. Let $\xi\in\dom\D$ and $\eta\in Y$. Then
\begin{align*}
\Ideal{[\D,a]\xi,\eta}=\Ideal{a\xi,\D^*\eta}-\Ideal{\D\xi,a^*\eta}=\Ideal{\xi,a^*\D^*\eta}-\Ideal{\xi,\D^* a^*\eta}=\Ideal{\xi,-[\D^*,a^*]\eta}.
\end{align*}
Hence $\dom([\D,a])^*\supset Y$. Since $[\D,a]$ is closeable if and only if 
$([\D,a])^*$ is densely defined, if $Y$ is dense in $\H$ then $[\D,a]$ is closeable and thus extends to an operator in $\B(\H)$.
\end{proof}

\begin{corl}
Condition 1. of Definition \ref{def:ST} is equivalent to
\begin{itemize}
\item[i.]
for all $a\in\A$ there exists a core $X$ for $\D$ such that $a\cdot X\subset\dom\,\D$, 
and such that $[\D,a]:X\rightarrow\H$ is bounded on $X$.
\end{itemize}
\end{corl}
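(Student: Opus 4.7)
The proof is a direct application of the preceding Proposition, using only that $\D$ is self-adjoint and that $\A$ is a $*$-algebra.

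The implication Condition~1 $\Rightarrow$ (i) is immediate: simply take $X=\dom\D$, which is trivially a core for $\D$, and observe that the two requirements in (i) then reduce to the two requirements in Condition~1.

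For the converse, assume (i). Fix $a\in\A$ and let $X$ be the core for $\D$ provided by (i), so that $a\cdot X\subset\dom\D$ and $[\D,a]$ is bounded on $X$. The first assertion of the Proposition gives $a\cdot\dom\D\subset\dom\D$, so that $[\D,a]\colon\dom\D\to\H$ is well-defined. To obtain boundedness, I apply the second assertion of the Proposition. Since $\A$ is a $*$-algebra, $a^*\in\A$, so (i) also furnishes a core $X'$ for $\D$ with $a^*\cdot X'\subset\dom\D$. Because $\D$ is self-adjoint, $\dom\D^*=\dom\D$, so this reads $X'\subset\dom\D^*$ and $a^*\cdot X'\subset\dom\D^*$.

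It remains to see that $X'$ is dense in $\H$. But $X'$ is a core for $\D$, hence dense in $\dom\D$ in the graph norm, and therefore dense in $\H$ in the Hilbert-space norm. Taking $Y=X'$ in the Proposition, all hypotheses for the second assertion are met, and we conclude that $[\D,a]\colon\dom\D\to\H$ extends to a bounded operator in $\B(\H)$. This is precisely Condition~1.

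The only conceptual step is recognising that self-adjointness of $\D$ together with the $*$-stability of $\A$ automatically supplies the auxiliary dense subspace $Y$ required by the Proposition; there is no real obstacle beyond this bookkeeping.
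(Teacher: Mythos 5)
Your proof is correct and is exactly the argument the paper intends: the corollary is stated as an immediate consequence of the preceding Proposition, with self-adjointness of $\D$ giving $\dom\D^*=\dom\D$ and the $*$-stability of $\A$ supplying the dense subspace $Y$ (a core for $a^*$) needed for the second assertion. No gaps.
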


To simplify some later computations with bounded transforms $F=\D(1+\D^2)^{-1/2}$
of unbounded self-adjoint operators, we include the following elementary Lemma.

\begin{lemma} 
\label{lem:compact-comms}
Let $\D$ be an unbounded self-adjoint operator on the Hilbert space $\H$, and suppose
that $(1+\D^2)^{-1/2}$ is compact. Then with $F=\D(1+\D^2)^{-1/2}$, $P_+=\chi_{[0,\infty)}(\D)$,
$P_-=1-P_+$, and $A\subset \B(\H)$ a $C^*$-algebra,
the operator $[F,a]$ is compact for all $a\in A$ if and only if $P_{+}aP_{-}$ is compact for all $a\in A$.
\end{lemma}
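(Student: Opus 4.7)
The plan is to reduce the statement about $F$ to a statement about the sign of $\D$. Let $\epsilon := P_{+}-P_{-} = 2P_{+} - 1$. Since $P_{+}=\chi_{[0,\infty)}(\D)$, we have $\epsilon = \mathrm{sgn}(\D)$ (with the convention $\mathrm{sgn}(0)=1$), and $\epsilon^{2}=1$.

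The key step is to show that $F-\epsilon$ is compact. Writing $F=f(\D)$ with $f(x)=x(1+x^{2})^{-1/2}$ via Borel functional calculus, I will consider the bounded Borel function
\enveqn{h(x):=f(x)-\mathrm{sgn}(x)=\frac{x}{(1+x^{2})^{1/2}}-\mathrm{sgn}(x).}
Multiplying through, $(1+x^{2})^{1/2}h(x)=x-\mathrm{sgn}(x)(1+x^{2})^{1/2}$, which is bounded on $\mathbb{R}$ (it tends to $0$ at $\pm\infty$, with a jump at the origin). Hence $g(x):=(1+x^{2})^{1/2}h(x)$ is bounded, and
\enveqn{F-\epsilon \;=\; h(\D) \;=\; (1+\D^{2})^{-1/2}\,g(\D).}
By hypothesis $(1+\D^{2})^{-1/2}$ is compact and $g(\D)$ is bounded, so $F-\epsilon\in\K(\H)$.

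With this in hand the rest is formal. For $a\in A$,
\enveqn{[F,a]-[\epsilon,a] \;=\; [F-\epsilon,a] \;\in\; \K(\H),}
so $[F,a]$ is compact if and only if $[\epsilon,a]$ is compact. Since $\epsilon=2P_{+}-1$, one has $[\epsilon,a]=2[P_{+},a]$, and by inserting $1=P_{+}+P_{-}$ on both sides,
\enveqn{[P_{+},a] \;=\; P_{+}aP_{-}-P_{-}aP_{+}.}

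Finally, $P_{+}aP_{-}$ compact for all $a\in A$ implies $(P_{+}a^{*}P_{-})^{*}=P_{-}aP_{+}$ compact for all $a\in A$, because $A$ is $*$-closed. Thus $[\epsilon,a]\in\K(\H)$ for all $a\in A$ iff $P_{+}aP_{-}\in\K(\H)$ for all $a\in A$, completing the equivalence.

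The only nontrivial step is the compactness of $F-\epsilon$; everything else is bookkeeping with commutators. The use of the $C^{*}$-algebra hypothesis (in particular, $A=A^{*}$) is essential in the last line, since without it one would only control the off-diagonal block $P_{+}aP_{-}$ and not $P_{-}aP_{+}$.
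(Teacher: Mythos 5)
Your proof is correct and follows essentially the same route as the paper: replace $F$ by the sign/phase $\epsilon=P_+-P_-$ of $\D$ modulo compacts, compute $[\epsilon,a]=2(P_+aP_--P_-aP_+)$, and use the $*$-closedness of $A$ to pass between the two off-diagonal blocks. The only difference is that you spell out, via the functional-calculus factorisation $F-\epsilon=(1+\D^2)^{-1/2}g(\D)$, the compactness of $F-\epsilon$ that the paper merely asserts.
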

\begin{proof}
The phase of $\D$ is 
$$
\textnormal{Ph}(\D)=P_+-P_-,
$$
and is a compact perturbation of 
$F=\D(1+\D^2)^{-1/2}$, so for $a\in A$, the commutator $[F,a]$ is compact if and only if 
$[\textnormal{Ph}(\D),a]$ is compact. Since $P_{+}+P_{-}=1$, we see that
\begin{align*}
[\textnormal{Ph}(\D),a]=(P_{+}+P_{-})[\textnormal{Ph}(\D),a](P_{+}+P_{-})
&=2P_+aP_--2P_-aP_+,
\end{align*}
so that $[\textnormal{Ph}(\D),a]$ is compact if and only if $P_+aP_--P_-aP_+$ is compact. 
If $P_+aP_--P_-aP_+$ is compact, then so are
$$
P_+(P_+aP_--P_-aP_+)=P_+aP_-\quad\text{and}\quad -P_-(P_+aP_--P_-aP_+)=P_-aP_+,
$$
so $[F,a]$ is compact if and only if $P_+aP_-$ and $P_-aP_+$ 
are compact. Since $(P_+aP_-)^*=P_-a^*P_+$, we have
$[F,a]$ is compact for all $a\in A$ if and only if 
$P_+aP_-$ is compact for all $a\in A$.
\end{proof}

\section{The counterexamples}
\label{sec:background}

In this section we produce counterexamples to  statements appearing 
in \cite[Theorems 1.2, 1.3, 6.2]{Valette}. The first and fourth of these 
counterexamples also show
that the definition of spectral triple using condition 1.' in place of condition 1. does 
not guarantee that we obtain a Fredholm module.
\subsection{Finite deficiency indices: the unit interval}
\label{subsec:finite}
Initially, the authors of \cite{Valette} confine their attention to 
symmetric operators with equal and finite deficiency indices, 
\cite[Definition 1.1, Theorem 1.2]{Valette}. We begin with our counterexample to
their claims that a Fredholm module is obtained from any self-adjoint extension
of such an operator (which must also satisfy spectral-triple-like conditions). Our extension
will also satisfy the definition of spectral triple using condition 1.'.
In particular, \cite[Proposition 17.11.3]{Bl} and
\cite[Theorem 1.2]{Valette} are false. 

The basic properties of the following example are worked out in \cite[Volume I]{RS}.
Let $\H=L^2([0,1])$ and let $AC([0,1])$ be the absolutely continuous functions. Set
$$
{\rm dom}\,\D=\{f\in AC([0,1]):\, f'\in L^{2}([0,1]),\  f(0)=f(1)=0\},
\quad\D=\frac{1}{i}\frac{d}{dx},
$$
so that $\D$ is a closed symmetric operator with adjoint
$$
{\rm dom}\,\D^*=\{f\in AC([0,1]):\,f'\in L^{2}([0,1])\},\quad\D^*=\frac{1}{i}\frac{d}{dx}.
$$
The deficiency indices of $\D$ are both 1. The operator $\D^*\D$ has normalised eigenvectors
$$
\D^*\D\left(\sqrt{2}\sin(\pi n x)\right)=\pi^2n^2\sqrt{2}\sin(\pi n x),\quad n\in\mathbb{Z},
$$
which are known to be complete for $L^2([0,1])$. 
Since $n^2\pi^2\rightarrow\infty$ as $|n|\rightarrow\infty$, it follows that
$$
(1+\D^*\D)^{-1/2}\in \K(\H).
$$
It is clear that $C^\infty([0,1])$ preserves both ${\rm dom}\,\D$ 
and ${\rm dom}\,\D^*$, and that $[\D^*,a]$ is bounded for all $a\in C^\infty([0,1])$. 
In particular, the data $(C^\infty([0,1]), L^2([0,1]),\D)$ satisfy \cite[Definition 1.1]{Valette}.
Let $\D_0$ be the self-adjoint extension defined by
$$
{\rm dom}\,\D_0=\{f\in AC([0,1]):\,f'\in L^{2}([0,1]), f(0)=f(1)\}.
$$ 
The eigenvectors of $\D_0$ are
$$
\D_0e^{2\pi inx}=2\pi n\, e^{2\pi inx},\quad n\in\mathbb{Z},
$$
which by Fourier theory form a complete basis for $\H$. Hence the 
non-negative spectral projection $P_{+}$ associated to $\D_0$ is 
the projection onto $\ol{\Span}\{e^{2\pi in x}:n\geq0\}$.

Since $\D_{0}$ has compact resolvent and is self-adjoint, any failure to obtain a Fredholm
module (and so $K$-homology class) must arise from some function $f\in C([0,1])$ having
non-compact commutator with $F:=\D_{0}(1+\D_{0}^2)^{-1/2}$. 
Indeed this is the case, and to see this
let $x$ be the identity function on $[0,1]$, which generates $C([0,1])$ along with the 
constant functions.  Lemma \ref{lem:compact-comms} shows that to prove that $[F,x]$ 
is not compact, it suffices to prove that $P_+xP_-$ is not compact.

Elementary Fourier theory shows that for $ \sum_{n\in\Z}f_ne^{2\pi inx}\in L^2([0,1])$
$$
x\cdot\sum_{n\in\Z}f_n\,e^{2\pi i n x}=\sum_{n,l\in\Z} f_n\left(\frac{1-\delta_{\ell n}}{2\pi i(n-\ell)}+\frac{1}{2}\delta_{\ell n}\right)\,e^{2\pi i \ell x}.
$$
With $P_+$ the non-negative spectral projection associated to $D_{0}$ 
and $P_-=1-P_+$, we find that
$$
P_+xP_-\cdot\sum_{n\in\Z}f_n\,e^{2\pi i n x}
=\frac{-1}{2\pi i}\sum_{n\geq 1,\,\ell\geq 0}\frac{f_{-n}}{n+\ell}\,e^{2\pi i \ell x}.
$$
Then for $m\in\mathbb{N}$ we define the sequence of vectors
$$
\xi_m=\sum_{n=1}^\infty\frac{\sqrt{m}}{n+m}\,e^{-2\pi i n x}.
$$
\begin{lemma}\label{lem:boundedsequence1}
The sequence $\{\xi_m\}_{m=1}^\infty$ is bounded.
\end{lemma}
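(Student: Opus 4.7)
The plan is to compute $\|\xi_m\|^2$ directly using Parseval, and then bound the resulting series by a convergent improper integral that is independent of $m$.

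Since $\{e^{-2\pi i n x}\}_{n\in\Z}$ is an orthonormal basis of $L^2([0,1])$, the coefficients $\frac{\sqrt m}{n+m}$ appearing in $\xi_m$ are already the Fourier coefficients, so Parseval gives
\begin{equation*}
\|\xi_m\|^2=\sum_{n=1}^\infty\frac{m}{(n+m)^2}.
\end{equation*}
What remains is to show this numerical series is bounded uniformly in $m\in\N$.

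I would handle the uniform bound by an integral comparison. For each fixed $m\geq 1$ the function $t\mapsto \frac{m}{(t+m)^2}$ is positive and decreasing on $[0,\infty)$, so
\begin{equation*}
\sum_{n=1}^\infty\frac{m}{(n+m)^2}\leq\int_0^\infty\frac{m}{(t+m)^2}\,dt=\left[-\frac{m}{t+m}\right]_0^\infty=1.
\end{equation*}
Hence $\|\xi_m\|\leq 1$ for every $m$, proving that the sequence $\{\xi_m\}_{m=1}^\infty$ is bounded.

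There is essentially no obstacle here: the statement is a short orthonormality-plus-integral-test computation. The only thing to watch is that the series defining $\xi_m$ really does converge in $L^2$, but that follows from the same estimate as soon as one observes the coefficients are square-summable with $\ell^2$-norm bounded by $1$. This Lemma is clearly being set up so that in the next step one can test $P_+xP_-$ (or a related operator built from the Fourier formula displayed just before the Lemma) against $\{\xi_m\}$ and show the images do not form a sequence with a norm-convergent subsequence, thereby contradicting compactness.
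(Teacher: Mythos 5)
Your proof is correct and follows the same route as the paper: both compute $\|\xi_m\|^2=\sum_{n=1}^\infty m/(n+m)^2$ by orthonormality and then bound this numerical series. The only difference is in the last step, where the paper identifies the series as $m\psi^{(1)}(m+1)$ and invokes the asymptotics of the trigamma function to conclude $\|\xi_m\|^2\to 1$, while your integral comparison gives the (slightly more elementary, and equally sufficient) uniform bound $\|\xi_m\|^2\le 1$.
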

\begin{proof}
We have
\begin{align*}
\|\xi_m\|^2=m\sum_{n=1}^\infty\frac{1}{(m+n)^2}=m\psi^{(1)}(m+1),
\end{align*}
where $\psi^{(k)}(x)=(d^{k+1}/dx^{k+1})(\log(\Gamma))(x)$ is the polygamma function of order 
$k$. As $m\rightarrow\infty$, $(m+1)\psi^{(1)}(m+1)\to 1$, so
\begin{align*}
\lim_{m\rightarrow\infty}\|\xi_m\|^2&=\lim_{m\rightarrow\infty}m\cdot\frac{1}{m+1}=1.&&\qedhere
\end{align*}
\end{proof}

With $\zeta_m=P_+xP_-\xi_m$ and $\psi^{(0)}(x)=(d/dx)(\log(\Gamma))(x)$ the digamma
function, we find that
\begin{align}
\Vert\zeta_m\Vert^2&=\frac{m}{4\pi^2}\sum_{\ell=0}^\infty\left(\sum_{n=1}^\infty\frac{1}{(n+m)(n+\ell)}\right)^2\geq\frac{m}{4\pi^2}\sum_{\ell=0}^{m-1}\left(\sum_{n=1}^\infty\frac{1}{(n+m)(n+\ell)}\right)^2\notag\\
&=\frac{m}{4\pi^2}\sum_{\ell=0}^{m-1}\left(\frac{\psi^{(0)}(m+1)-\psi^{(0)}(\ell+1)}{m-\ell}\right)^2=\frac{m}{4\pi^2}\sum_{\ell=0}^{m-1}\frac{1}{(m-\ell)^2}\left(\sum_{k=0}^{m-\ell-1}\frac{1}{\ell+k+1}\right)^2\notag\\
&\geq\frac{m}{4\pi^2}\sum_{\ell=0}^{m-1}\frac{1}{(m-\ell)^2}\left(\frac{m-\ell}{\ell+(m-\ell-1)+1}\right)^2=\frac{m}{4\pi^2}\sum_{\ell=0}^{m-1}\frac{1}{m^2}=\frac{1}{4\pi^2}.
\label{eq:nail-in-coffin}
\end{align}

\begin{lemma}\label{lem:convergezero1}
If $\{\zeta_m\}_{m=1}^\infty$ has a norm convergent 
subsequence $\{\zeta_{m_j}\}_{j=1}^\infty$, then $\zeta_{m_j}\rightarrow0$.
\end{lemma}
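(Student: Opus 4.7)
The plan is to show that $\zeta_m \to 0$ \emph{weakly} in $\H = L^2([0,1])$. Once this is established, uniqueness of limits forces any norm convergent subsequence to converge to the weak limit, which is $0$.

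First I would note that $\{\zeta_m\}$ is bounded: by Lemma \ref{lem:boundedsequence1} the sequence $\{\xi_m\}$ is bounded, and $\zeta_m = P_+ x P_- \xi_m$ is the image under a bounded operator. Since $\{e^{2\pi i \ell x}\}_{\ell \in \Z}$ is an orthonormal basis of $\H$, to prove weak convergence $\zeta_m \to 0$ it suffices (by a standard density/boundedness argument) to check that the Fourier coefficient $\la \zeta_m, e^{2\pi i \ell x}\ra$ tends to $0$ as $m \to \infty$ for each fixed $\ell \in \Z$.

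From the formula for $P_+ x P_-$ given in the excerpt, the $\ell$-th Fourier coefficient of $\zeta_m$ vanishes for $\ell < 0$ and equals
\enveqn{
c_{m,\ell} = \frac{-\sqrt{m}}{2\pi i} \sum_{n=1}^\infty \frac{1}{(n+m)(n+\ell)}, \qquad \ell \geq 0.
}
Fix $\ell \geq 0$ and take $m > \ell$. Partial fractions give $\sum_{n=1}^\infty \frac{1}{(n+m)(n+\ell)} = \frac{\psi^{(0)}(m+1) - \psi^{(0)}(\ell+1)}{m-\ell}$, which is the same identity used in \eqref{eq:nail-in-coffin}. Using the asymptotic $\psi^{(0)}(m+1) \sim \log m$ as $m \to \infty$, we obtain
\enveqn{
|c_{m,\ell}| \;\sim\; \frac{\sqrt{m}\,\log m}{2\pi (m-\ell)} \;\longrightarrow\; 0 \quad \text{as } m \to \infty,
}
for each fixed $\ell \geq 0$. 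Combined with boundedness, this yields $\zeta_m \rightharpoonup 0$ weakly in $\H$.

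Finally, if $\{\zeta_{m_j}\}$ converges in norm to some $\zeta \in \H$, then it converges weakly to $\zeta$ as well; but the full sequence (hence every subsequence) converges weakly to $0$, so $\zeta = 0$. The main technical point is the asymptotic estimate for the polygamma function, but this is a straightforward consequence of the standard expansion and poses no real obstacle.
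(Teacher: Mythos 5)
Your proposal is correct and follows essentially the same route as the paper's proof: both compute the Fourier coefficients $\langle\zeta_m, e^{2\pi i \ell x}\rangle$ via the digamma identity and use $\psi^{(0)}(m+1)\sim\log m$ to conclude they vanish as $m\to\infty$, forcing any norm limit to be zero. Your extra framing in terms of weak convergence of the full sequence (using boundedness of $\{\zeta_m\}$) is a harmless elaboration of the same argument.
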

\begin{proof}
We show that $\lim_{m\rightarrow\infty}\Ideal{\zeta_m\,|\,e^{2\pi i px}}=0$ for all 
$p\in\mathbb{\Z}$, which shows that if $\zeta_{m_j}\rightarrow\zeta$, then $\zeta=0$. We have
\begin{align*}
\Ideal{\zeta_m\,|\,e^{2\pi i px}}=\left\{\begin{array}{ll} \sum_{n=1}^\infty\frac{-\sqrt{m}}{2\pi i(m+n)(n+p)} & p\geq 0\\ 0 & {\rm otherwise}.\end{array}\right.
\end{align*}
Thus we can ignore the case $p<0$. Computing further gives
\begin{align*}
\Ideal{\zeta_m\,|\,e^{2\pi i px}}=\left\{\begin{array}{ll} \frac{-\sqrt{m}}{2\pi i}\left(\frac{\psi^{(0)}(m+1)-\psi^{(0)}(p+1)}{m-p}\right) & p\geq 0,\ p\neq m\\  \frac{-\sqrt{m}}{2\pi i}\psi^{(1)}(m+1)& p=m.\end{array}\right.
\end{align*}
Since $\psi^{(0)}(m+1)\sim \log(m+1)$  as $m\to\infty$, 
we see that in all cases $\Ideal{\zeta_m\,|\,e^{2\pi i p x}}\to 0$ as $m\to\infty$.
\end{proof}
\begin{corl} The sequence
$\{\zeta_m\}_{m=1}^\infty$ has no norm convergent subsequences.
\end{corl}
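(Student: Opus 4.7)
The plan is to combine the two estimates already in place: the norm lower bound from the displayed calculation \eqref{eq:nail-in-coffin} and the weak-convergence-to-zero conclusion of Lemma \ref{lem:convergezero1}. These two facts are directly incompatible along any norm convergent subsequence.

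More precisely, I would argue by contradiction. Suppose $\{\zeta_m\}_{m=1}^\infty$ admits a norm convergent subsequence $\{\zeta_{m_j}\}_{j=1}^\infty$ with limit $\zeta \in \H$. By Lemma \ref{lem:convergezero1} applied to this subsequence, we must have $\zeta = 0$, so $\|\zeta_{m_j}\| \to 0$ as $j \to \infty$. On the other hand, the chain of inequalities culminating in \eqref{eq:nail-in-coffin} gives $\|\zeta_m\|^2 \geq \frac{1}{4\pi^2}$ for every $m \geq 1$, hence $\|\zeta_{m_j}\| \geq \frac{1}{2\pi}$ for all $j$. This contradicts $\|\zeta_{m_j}\| \to 0$, so no norm convergent subsequence can exist.

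There is no real obstacle here; the corollary is a purely formal consequence of the two preceding lemmas together with the uniform lower bound on $\|\zeta_m\|$. The only thing to be careful about is to cite \eqref{eq:nail-in-coffin} (rather than redoing the estimate) and to note that the conclusion of Lemma \ref{lem:convergezero1} forces the norm limit of any convergent subsequence to be $0$. Having established that $\{\zeta_m\}$ is bounded (via Lemma \ref{lem:boundedsequence1} and boundedness of $P_+ x P_-$) but has no norm convergent subsequence, we conclude that $P_+ x P_-$ is not compact, and therefore by Lemma \ref{lem:compact-comms} the commutator $[F,x]$ is not compact — which is the ultimate purpose of this subsection.
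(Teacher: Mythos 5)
Your argument is correct and is exactly the paper's proof: assume a norm convergent subsequence exists, use Lemma \ref{lem:convergezero1} to force the limit to be $0$, and contradict the uniform lower bound $\|\zeta_m\|^2\geq\frac{1}{4\pi^2}$ from Equation \eqref{eq:nail-in-coffin}. Nothing is missing.
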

\begin{proof}
If $\zeta_m$ had a convergent subsequence $\{\zeta_{m_j}\}_{j=1}^\infty$, 
then $\zeta_{m_j}\rightarrow0$ by Lemma \ref{lem:convergezero1}. 
But by Equation \eqref{eq:nail-in-coffin}, $\|\zeta_{m_j}\|\nrightarrow0$, which is a contradiction.
\end{proof}
\begin{corl} The operator
$P_+xP_-$ is not compact.
\end{corl}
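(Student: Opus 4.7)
The plan is to invoke the standard characterisation of compactness: a bounded operator on a Hilbert space is compact if and only if it maps bounded sequences to sequences with norm-convergent subsequences. All the hard work has already been done in the preceding lemmas, so the proof is essentially a one-line assembly.

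First I would recall that by Lemma \ref{lem:boundedsequence1}, the sequence $\{\xi_m\}_{m=1}^\infty$ is bounded in $\H$. Next, by the preceding corollary, the sequence $\{\zeta_m\}_{m=1}^\infty = \{P_+xP_-\xi_m\}_{m=1}^\infty$ has no norm-convergent subsequence. If $P_+xP_-$ were compact, then applied to the bounded sequence $\{\xi_m\}$ it would have to produce a sequence admitting a norm-convergent subsequence, contradicting the corollary. Hence $P_+xP_-$ is not compact.

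There is no genuine obstacle here; the substance of the argument is contained in the estimate \eqref{eq:nail-in-coffin}, which provides the uniform lower bound $\|\zeta_m\| \geq 1/(2\pi)$, and in Lemma \ref{lem:convergezero1}, which shows that any possible limit of a subsequence of $\{\zeta_m\}$ must be zero. These two facts are incompatible, and the characterisation of compactness then immediately delivers the conclusion.
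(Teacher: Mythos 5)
Your proof is correct and is essentially identical to the paper's: both combine the boundedness of $\{\xi_m\}$ from Lemma \ref{lem:boundedsequence1} with the preceding corollary that $\{P_+xP_-\xi_m\}$ has no norm-convergent subsequence, and conclude via the sequential characterisation of compactness.
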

\begin{proof}
By Lemma \ref{lem:boundedsequence1}, $\{\xi_m\}_{m=1}^\infty$ is bounded, but 
$\{P_+xP_-\xi_m\}_{m=1}^\infty$ contains no convergent subsequence.
Hence
$P_+xP_-$ is not compact.
\end{proof}
In summary we have shown the following:
\begin{prop} The self-adjoint extension $\D_{0}$ of the closed symmetric 
operator $\D$ has compact resolvent, and for all $a\in C^\infty([0,1])$, the 
commutators $[\D_{0},a]$ are defined on 
${\rm dom}\,\D$, and are bounded on this dense subset. 
The bounded transform $F:=\D_{0}(1+\D_{0}^{2})^{-\frac{1}{2}}$ 
has the property that the commutator $[F,x]$ is not a compact operator. 
Therefore $(C([0,1]), L^{2}([0,1]), F)$ does not define a Fredholm module.
\end{prop}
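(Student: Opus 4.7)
My plan is to assemble the proposition from pieces already established in the section; most of the real work has been done in the preceding lemmas and corollaries, and the task here is mainly to gather and knit them together, with one small piece of bookkeeping about commutators on $\dom\D$.

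First I will verify the compact resolvent claim by invoking the explicit spectral decomposition of $\D_0$ already exhibited. Since $\{e^{2\pi i n x}\}_{n\in\Z}$ is a complete orthonormal basis with $\D_0 e^{2\pi inx}=2\pi n\,e^{2\pi inx}$, the operator $(1+\D_0^2)^{-1/2}$ is diagonalised on this basis with eigenvalues $(1+4\pi^2 n^2)^{-1/2}\to 0$, hence compact. In particular $\D_0$ has compact resolvent.

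Next I will handle the commutator claim on $\dom\D$. For $a\in C^\infty([0,1])$ and $f\in\dom\D$, the product $af$ is absolutely continuous with $(af)'=a'f+af'\in L^2([0,1])$ and satisfies $(af)(0)=(af)(1)=0$, so $af\in\dom\D\subset\dom\D_0$. A direct computation on $\dom\D$ then gives
\begin{equation*}
[\D_0,a]f=\tfrac{1}{i}(af)'-a\tfrac{1}{i}f'=\tfrac{1}{i}a'\,f,
\end{equation*}
which is multiplication by the bounded function $a'/i$ and so extends to a bounded operator on $L^2([0,1])$.

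The heart of the proposition is the non-compactness of $[F,x]$. Here I will apply Lemma \ref{lem:compact-comms}, which reduces the question to showing that $P_+xP_-$ is not a compact operator. I will then invoke the machinery built up immediately before the proposition. By Lemma \ref{lem:boundedsequence1} the sequence $\{\xi_m\}$ is bounded, while the images $\zeta_m=P_+xP_-\xi_m$ satisfy $\|\zeta_m\|^2\geq 1/(4\pi^2)$ by Equation \eqref{eq:nail-in-coffin} and converge weakly to zero by Lemma \ref{lem:convergezero1}. Any norm-convergent subsequence of $\{\zeta_m\}$ would therefore be forced to converge to zero in norm, contradicting the uniform lower bound. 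Hence $P_+xP_-$ carries a bounded sequence to one with no convergent subsequence, so it is not compact, and consequently $[F,x]$ is not compact. Since $x$ generates $C([0,1])$ together with the constants, the triple $(C([0,1]),L^2([0,1]),F)$ cannot be a Fredholm module.

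The substantive analytic obstacle lies entirely in the polygamma estimates already completed for Equation \eqref{eq:nail-in-coffin} and Lemma \ref{lem:convergezero1}; at the level of the proposition itself the only mild point requiring care is checking that $C^\infty([0,1])$ preserves $\dom\D$ (as opposed to merely $\dom\D^*$), which is exactly what vanishing boundary conditions for $f$ supply.
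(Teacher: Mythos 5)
Your proposal is correct and follows essentially the same route as the paper: the proposition is stated there as a summary of the preceding results, and your assembly (compact resolvent from the explicit eigenbasis, $[\D_0,a]=\tfrac{1}{i}a'$ on $\dom\D$ since multiplication by $a$ preserves the vanishing boundary conditions, and non-compactness of $P_+xP_-$ via Lemma \ref{lem:compact-comms}, Lemma \ref{lem:boundedsequence1}, Equation \eqref{eq:nail-in-coffin} and Lemma \ref{lem:convergezero1}) is exactly the intended argument.
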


\subsection{Infinite deficiency indices: the unit disc}

The next three subsections produce counterexamples to three statements appearing 
in \cite[Theorems 1.3, 6.2]{Valette}. These theorems rely on 
both the finite deficiency index case, and the 
extended definition in \cite[Definition 6.3]{Valette}, which allows for symmetric 
operators having infinite (and equal) deficiency indices. 
The third of the counterexamples below again shows 
that the definition of spectral triple using condition 1.' in place of condition 1. does 
not guarantee that we obtain a Fredholm module.

The counterexamples below will be described using a single basic example. 
For this we let $\mathbb{D}$
be the closed unit disc in $\R^2$, and take
the Hilbert space $L^2(\mathbb{D},\C^2)$ with the measure
$$
C(\mathbb{D})\ni f\mapsto \frac{1}{2\pi}\int_0^{2\pi}\!\!\!\int_0^1 f(r,\theta)\,r\,dr\,d\theta.
$$
Write $\mathring{\mathbb{D}}:=\mathbb{D}\setminus\partial\mathbb{D}$ for the interior of $\mathbb{D}$. We will use the Dirac operator  on $\mathring{\mathbb{D}}$ for our example. This is a densely defined symmetric operator on $L^2(\mathbb{D},\mathbb{C}^2)$, which is given in local polar coordinates by
$$
\D_c:=\begin{pmatrix}0&e^{-i\theta}(-\de_r+ir^{-1}\de_\theta)\\e^{i\theta}(\de_r+ir^{-1}\de_\theta)&0\end{pmatrix}: C_{c}^{\infty}(\mathring{\mathbb{D}}, \C^{2})\rightarrow  C_{c}^{\infty}(\mathring{\mathbb{D}},\C^{2}).
$$ 
Let $\D$ be the closure of $\D_c$,
and observe that its domain is given by
$$
{\rm dom}\,\D=\{f\in L^{2}(\mathbb{D},\C^{2}): \exists f_{n}\in C_{c}^{\infty}(\mathring{\mathbb{D}},\C^2), \quad f_{n}\rightarrow f,\quad \D_c f_{n}\rightarrow g\in L^{2}(\mathbb{D},\C^{2})\}.
$$
This is also referred to as the \emph{minimal domain} (or minimal extension) of the Dirac operator.

 The \emph{maximal domain} (or maximal extension) of the Dirac operator 
 is the domain of its adjoint $\D^{*}$. This extension can be  
 described using distributions. The symmetric operator $\D_c$ induces a dual operator 
\[ 
\D_c^{\dag}:C_{c}^{\infty}(\mathring{\mathbb{D}}, \C^{2})^{\dag}\rightarrow C_{c}^{\infty}(\mathring{\mathbb{D}}, \C^{2})^{\dag},
\]
on the space of distributions $C_{c}^{\infty}(\mathring{\mathbb{D}}, \C^{2})^{\dag}$, 
uniquely determined by the formula 
\[
\langle \D_c^{\dag}\phi,f\rangle:=\langle \phi, \D_c f\rangle,\qquad \phi\in C_c^\infty(\mathring{\mathbb{D}}, \C^{2})^{\dag},\ f\in C_{c}^{\infty}(\mathring{\mathbb{D}}, \C^{2}).
\]
A similar formula embeds $L^{2}(\mathbb{D},\C^{2})$ into the space of distributions. Using these identifications, the domain of $\D^*$ is given by
\[
{\rm dom}\, \D^{*}=\{f\in L^{2}(\mathbb{D},\C^{2}): \D_c^{\dag}f\in L^{2}(\mathbb{D},\C^{2})\}.
\] 
The domain of $\D^*$ coincides with the first Sobolev space 
$H^1(\mathbb{D},\C^2)$, \cite[Proposition 20.7]{BBW}. With this characterisation
it is straightforward to check that
for any smooth bounded function $a$ on the disc, $a:{\rm dom}\,\D\to{\rm dom}\,\D$
and $a:{\rm dom}\,\D^*\to{\rm dom}\,\D^*$, and $[\D^*,a]$ is bounded on both ${\rm dom}\,\D$
and ${\rm dom}\,\D^*$. 
\begin{lemma} The operator $(1+\D^*\D)^{-1/2}$ is compact.
\end{lemma}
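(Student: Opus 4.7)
My strategy is to identify $\dom\D$ with the first Sobolev space $H^1_0(\mathbb{D},\C^2)$ and then invoke the Rellich--Kondrachov compact embedding theorem. The essential ingredient is the elementary ellipticity identity $\D_c^2=-\Delta\otimes I_2$, which reduces the problem to a standard Sobolev computation.

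First I would verify this ellipticity identity by a coordinate change. Recognising that in polar coordinates
$$2\partial_z=e^{-i\theta}(\partial_r-ir^{-1}\partial_\theta),\qquad 2\partial_{\bar z}=e^{i\theta}(\partial_r+ir^{-1}\partial_\theta),$$
where $\partial_z=\tfrac12(\partial_x-i\partial_y)$ and $\partial_{\bar z}=\tfrac12(\partial_x+i\partial_y)$, the operator rewrites as
$$\D_c=\begin{pmatrix}0&-2\partial_z\\ 2\partial_{\bar z}&0\end{pmatrix},$$
so that $\D_c^2=\mathrm{diag}(-4\partial_z\partial_{\bar z},\,-4\partial_{\bar z}\partial_z)=-\Delta\otimes I_2$. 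For any $f\in C_c^\infty(\mathring{\mathbb{D}},\C^2)$ integration by parts is justified by the compact support of $f$ strictly inside the disc, and gives
$$\|\D_c f\|^2=\langle f,\D_c^2 f\rangle=\langle f,-\Delta f\rangle=\|\nabla f\|^2.$$
Thus the graph norm $\|f\|^2+\|\D_c f\|^2$ coincides exactly with the $H^1$-norm on the dense subspace $C_c^\infty(\mathring{\mathbb{D}},\C^2)$.

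Since $\dom\D$ is by construction the completion of $C_c^\infty(\mathring{\mathbb{D}},\C^2)$ in the graph norm, the previous identity shows that this completion agrees, as a normed space, with the $H^1$-completion of the same subspace, namely $H^1_0(\mathbb{D},\C^2)$. By the Rellich--Kondrachov theorem, the inclusion $H^1_0(\mathbb{D},\C^2)\hookrightarrow L^2(\mathbb{D},\C^2)$ is compact.

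Finally, I would observe that $(1+\D^*\D)^{-1/2}$ factors as
$$L^2(\mathbb{D},\C^2)\xrightarrow{(1+\D^*\D)^{-1/2}}\bigl(\dom|\D|,\text{graph norm}\bigr)\xrightarrow{\simeq}H^1_0(\mathbb{D},\C^2)\hookrightarrow L^2(\mathbb{D},\C^2),$$
where the first map is isometric onto the graph-norm topology via the functional calculus identity $(1+t^2)^{-1}+t^2(1+t^2)^{-1}=1$, the middle identification is the isometry just established (using $\dom|\D|=\dom\D$ from polar decomposition), and the last inclusion is compact. Hence $(1+\D^*\D)^{-1/2}$ is compact. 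The only mildly delicate point is the coordinate computation showing $\D_c^2=-\Delta\otimes I_2$; everything else is a direct application of standard Sobolev theory.
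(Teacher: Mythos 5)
Your proof is correct, but it takes a genuinely different route from the paper's. The paper argues spectrally: it exhibits the explicit eigenfunctions $J_n(r\alpha_{n,k})e^{in\theta}$ of $\D^*\D$, invokes the Boas--Pollard completeness theorems for the Bessel systems $\{J_n(r\alpha_{n,k})\}_{k\geq 1}$ in $L^2([0,1],r\,dr)$ to see that these eigenfunctions span $L^2(\mathbb{D},\C^2)$, and concludes compactness from $\alpha_{n,k}\to\infty$. You instead establish the ellipticity identity $\|\D_c f\|^2=\|\nabla f\|^2$ on $C_c^\infty(\mathring{\mathbb{D}},\C^2)$ (your polar-coordinate computation of $\D_c=\bigl(\begin{smallmatrix}0&-2\partial_z\\ 2\partial_{\bar z}&0\end{smallmatrix}\bigr)$ checks out), identify $\dom\D$ with $H^1_0(\mathbb{D},\C^2)$ isometrically for the graph norm, and factor $(1+\D^*\D)^{-1/2}$ through the Rellich--Kondrachov compact embedding; the functional-calculus and polar-decomposition steps in your factorisation ($\dom|\D|=\dom\D$ with $\||\D|\xi\|=\|\D\xi\|$, and $(1+\D^*\D)^{-1/2}$ isometric from $L^2$ onto its image in the graph norm) are standard and correctly applied. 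Your argument is more robust: it requires no explicit diagonalisation and works verbatim for any first-order symmetric elliptic operator on a bounded domain, and Rellich--Kondrachov for $H^1_0$ needs no boundary regularity. What the paper's computation buys instead is the explicit spectrum and eigenbasis, which is the style of concrete spectral data the remainder of Section 3 relies on when it builds the noncompact-commutator counterexamples from analogous Bessel eigenbases of the self-adjoint extensions. Either proof is acceptable for this lemma.
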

\begin{proof}
The eigenvectors of $\D^*\D$ are
\begin{align*}
\left\{\begin{pmatrix}J_n(r\alpha_{n,k})e^{in\theta}\\0\end{pmatrix},\quad\begin{pmatrix}0\\J_n(r\alpha_{n,k})e^{in\theta}\end{pmatrix}:n\in\mathbb{Z},k=1,2,\ldots\right\},
\end{align*}
where $\alpha_{n,k}$ denotes the $k$-th positive root of the Bessel function $J_n$.
These eigenvectors are complete for $L^2(\mathbb{D},\mathbb{C}^2)$ by 
arguments similar to those in section \ref{sec:commutator}: 
namely $\{e^{in\theta}:n\in\mathbb{Z}\}$ is complete for $S^1$, 
and $\{J_n(r\alpha_{n,k}):k\geq1\}$ is complete for $L^2([0,1],r\,dr)$ for all $n\in\mathbb{Z}$,
\cite{BoasPollard}.

We note that
\begin{align*}
\D^*\D \begin{pmatrix}J_n(r\alpha_{n,k})e^{in\theta}\\0\end{pmatrix}=\alpha_{n,k}^2\begin{pmatrix}J_n(r\alpha_{n,k})e^{in\theta}\\0\end{pmatrix},\\
\D^*\D\begin{pmatrix}0\\J_n(r\alpha_{n,k})e^{in\theta}\end{pmatrix}=\alpha_{n,k}^2\begin{pmatrix}0\\J_n(r\alpha_{n,k})e^{in\theta}\end{pmatrix},
\end{align*}
so the eigenvalues of $\D^*\D$ are $\{\alpha_{n,k}^2\}_{n=0,k=1}^\infty$. 
Each of these eigenvalues has multiplicity 4. Since 
$\alpha_{n,k}\rightarrow\infty$ as $n,k\rightarrow\infty$, 
it follows that $(1+\D^*\D)^{-1/2}$ is compact.
\end{proof}
Since $(1+\D^*\D)^{-1/2}$ is compact,
the data $(C^{\infty}(\mathbb{D}),L^{2}(\mathbb{D},\C^{2}),\D)$ satisfies 
the definition of symmetric unbounded Fredholm module in \cite[Definition 6.3]{Valette}.
The closed symmetric operator $\D$ has infinite deficiency indices, since one may check directly that
\begin{align*}
\ker(\D^*\mp i)\supset\ol{\Span}\left\{\begin{pmatrix}\pm ie^{in\theta}I_n(r)\\e^{i(n+1)\theta}I_{n+1}(r)\end{pmatrix},\begin{pmatrix}\pm ie^{-i(n+1)\theta}I_{n+1}(r)\\e^{-in\theta}I_n(r)\end{pmatrix}:\,n\in\mathbb{N}\right\},
\end{align*}
where the $I_n$ are modified Bessel functions of the first kind. Thus $\D$ has self-adjoint extensions.
It is a well known general fact that any closed symmetric extension $\D_{ext}$ of $\D$ must 
satisfy ${\rm dom}\,\D\subset{\rm dom}\,\D_{ext}\subset{\rm dom}\,\D^{*}$, \cite{RS}.

\subsection{An example with noncompact resolvent}
\label{subsec:resolvent}

The arguments in the proofs of \cite[Theorems 1.2 and 6.2]{Valette} 
purport to show that all self-adjoint extensions of an
operator such as $\D$ above give rise to a Fredholm module 
(for $C^\infty(\mathbb{D})$ in this example). As in the finite deficiency index case, this fails,
but it can fail in more ways.

%

The issue of (relatively) compact resolvent is addressed  on \cite[page 198]{Valette}. 
The assertions about extensions used there are false\footnote{There are no non-trivial
self-adjoint extensions of a self-adjoint operator.}, and we now 
show how to obtain an extension with noncompact resolvent. Write
$$
\D
=\begin{pmatrix} 0 & \D_-\\ \D_+ & 0\end{pmatrix}.
$$
Then define a self-adjoint extension of $\D$ by 
$$
\D_{ext}:=\begin{pmatrix}0 & \D_+^*\\ \D_+ & 0\end{pmatrix},
$$
where $\D_+=(\D_+)_{min}$ is the minimal extension, and $((\D_+)_{min})^*=(\D_-)_{max}$
is the maximal extension of $\D_-$, \cite[Proposition 20.7]{BBW}.
As in Equation \eqref{eq:star} in the next section, it is easily 
checked that
\[\ker (\D_{-})_{max}=\{r^{n}e^{-in\theta}: n=0,1,\dots\},\]
thus $\D_{ext}$ has infinite dimensional kernel and so the resolvent is not compact. 
As the constant function $1\in C^\infty(\mathbb{D})$ acts as the identity on the 
Hilbert space, this shows that we fail to obtain a spectral triple for 
$C^\infty(\mathbb{D})$. Since this also means that 
$$
(1-F_{\D_{ext}}^2)=(1-\D_{ext}^2(1+\D_{ext}^2)^{-1})=(1+\D_{ext}^2)^{-1}
$$
is not compact, we do not obtain a Fredholm module for $C(\mathbb{D})$.

\subsection{The dependence of $K$-homology classes on the choice of extension}
Next we show that the claim in \cite[Theorem 6.2]{Valette} that the $K$-homology class
of a symmetric operator with equal deficiency indices
is independent of the self-adjoint extension is false. This example also shows 
that \cite[Theorem 1.3]{Valette} is false.

To define our self-adjoint extensions, we use boundary conditions. The trace theorem, 
\cite[Theorem 11.4]{BBW},
gives the continuity of $f\mapsto f|_{\partial\mathbb{D}}$ as a map 
${\rm dom}\,\D^*\to H^{1/2}(\partial\mathbb{D},\C^2)\subset L^2(S^1,\C^2)$. Thus we can 
use the boundary values to specify domains of extensions of $\D$ inside ${\rm dom}\,\D^*$. 

We consider APS-type  extensions arising
from the  projections $P_N:L^2(S^1)\to L^2(S^1)$, $N\in\mathbb{Z}$, defined by
$$
P_N\left(\sum_{k\in\Z}c_ke^{ik\theta}\right)=\sum_{k\geq N}c_ke^{ik\theta},
\quad\quad \sum_{k\in\Z}c_ke^{ik\theta}\in L^2(S^1).
$$
We use $P_N$ to 
define self-adjoint extensions by setting
\begin{align*}
{\rm dom}\,\D_{P_N}&:=\left\{\begin{pmatrix}\xi_1\\ \xi_2\end{pmatrix}\in{\rm dom}\,\D^*:\,  P_N(\xi_1|_{\de\mathbb{D}})=0,\ (1-P_{N+1})(\xi_2|_{\de\mathbb{D}})=0      \right\}\\
&\D_{P_N}\xi:=\D^*\xi,\qquad \mbox{for}\ \xi\in{\rm dom}\,\D_{P_N}.
\end{align*}

The self-adjoint extensions above
{\em do} define Fredholm modules and so $K$-homology 
classes for the algebra of functions constant on the boundary, since these
functions preserve the domain, but each $\D_{P_N}$
defines a different class. This is easy, and not new: see \cite[Appendix A]{BDT}, 
since the index (that is the pairing of the $K$-homology class with the 
constant function 1) is easily computed to be
$$
\textrm{Index}((\D_{P_N})_+)= N.
$$
The reason is that
\begin{equation}
\ker(\D^*)=\ol{\Span}\left\{\begin{pmatrix}r^ne^{in\theta}\\0\end{pmatrix},\begin{pmatrix}0\\r^ne^{-in\theta}\end{pmatrix}:n=0,1,2,\dots\right\},
\label{eq:star}
\end{equation}
and so
\begin{align*}
\ker((\D_{P_N})_+)&=\left\{\begin{array}{ll}\{0\}&N\leq0\\\ol{\Span}\{r^ne^{in\theta}:0\leq n<N\}&N>0,\end{array}\right.
\end{align*}
whilst
\begin{align*}
\ker((\D_{P_N})_-)&=\left\{\begin{array}{ll}\{0\}&N>-1\\\ol{\Span}\{r^ne^{-in\theta}:0\leq n\leq-N-1\}&N\leq-1.\end{array}\right.
\end{align*}

\subsection{Another noncompact commutator}
\label{sec:commutator}

In subsection \ref{subsec:finite} we showed that the weakened definition of spectral triple does not
suffice to guarantee that we obtain a Fredholm module. The example there also
showed that \cite[Theorem 1.2]{Valette} is false. Now we show that the  problem of noncompact commutators 
persists in the infinite deficiency index case. This shows that \cite[Theorem 6.2]{Valette}
can not be repaired by requiring that the self-adjoint extensions employed  have compact resolvents.

%
%
%

In this section, $\D_P$ shall denote the self-adjoint extension $\D_{P_0}$. As $\D_{P}$ is an extension of $\D$, we find that $[\D_{P},a]$ is defined and bounded on
the domain of $\D$, for all $a\in C^\infty(\mathbb{D})$. As in subsection \ref{subsec:finite}, 
we need to compute commutators with the phase of $\D_P$.

For $k\geq 1$, let $\alpha_{n,k}$ denote the $k\Th$ positive zero of the Bessel function
$J_n$. Then the  eigenvectors of $\D_P^2$ are
\begin{align}\label{eigenvectors}
&\left\{\begin{pmatrix}J_n(r\alpha_{n-1,k})e^{-in\theta}\\0\end{pmatrix},\begin{pmatrix}0\\J_n(r\alpha_{n-1,k})e^{in\theta}\end{pmatrix}\right\}_{n,k=1}^\infty,\nonumber\\ &\left\{\begin{pmatrix}J_n(r\alpha_{n,k})e^{in\theta}\\0\end{pmatrix},\begin{pmatrix}0\\J_n(r\alpha_{n,k})e^{-in\theta}\end{pmatrix}\right\}_{n=0,k=1}^\infty.
\end{align}
\begin{lemma}\label{Dsquare} The eigenvectors \eqref{eigenvectors} of $\D_{P}^{2}$ span $L^2(\mathbb{D},\mathbb{C}^2)$. The corresponding set of eigenvalues is $\{\alpha_{n,k}^2\}_{n=0,k=1}^\infty$, and hence the resolvent of $\D_{P}$ is compact.
\end{lemma}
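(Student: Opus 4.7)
The plan is to identify the spectral decomposition of $\D_{P}^{2}$ by a direct Fourier--Bessel computation and then deduce compactness of the resolvent from the growth of the eigenvalues. I would begin by recording how the off-diagonal entries $\D_{\pm}$ of $\D$ act on angular-Bessel modes. Starting from $\D_{+}(g(r)e^{i\mu\theta})=e^{i(\mu+1)\theta}(g'(r)-\tfrac{\mu}{r}g(r))$ and $\D_{-}(g(r)e^{i\mu\theta})=e^{i(\mu-1)\theta}(-g'(r)-\tfrac{\mu}{r}g(r))$, the Bessel recurrences $J_{n}'(x)=J_{n-1}(x)-\tfrac{n}{x}J_{n}(x)=\tfrac{n}{x}J_{n}(x)-J_{n+1}(x)$ yield
\begin{align*}
\D_{+}(J_{n}(r\alpha)e^{in\theta})&=-\alpha J_{n+1}(r\alpha)e^{i(n+1)\theta}, & \D_{-}(J_{n}(r\alpha)e^{in\theta})&=-\alpha J_{n-1}(r\alpha)e^{i(n-1)\theta},\\
\D_{+}(J_{n}(r\alpha)e^{-in\theta})&=\alpha J_{n-1}(r\alpha)e^{-i(n-1)\theta}, & \D_{-}(J_{n}(r\alpha)e^{-in\theta})&=\alpha J_{n+1}(r\alpha)e^{-i(n+1)\theta}.
\end{align*}
Applied to each of the four families in \eqref{eigenvectors}, $\D_{P}$ sends the vector to a multiple of another vector of the same form, and a second application returns a scalar multiple of the original. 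The scalar is $\alpha_{n-1,k}^{2}$ for the first two families and $\alpha_{n,k}^{2}$ for the last two, so collectively the eigenvalues are $\{\alpha_{n,k}^{2}\}_{n\geq 0,\,k\geq 1}$.

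Next, I would verify that the listed vectors genuinely belong to $\Dom\D_{P}^{2}$, not merely satisfy the formal eigenvalue equation. The series expansion of $J_{n}$ shows that $J_{n}(r\alpha)e^{\pm in\theta}$ is a polynomial in $z$ or $\bar z$ times an entire function of $|z|^{2}$, where $z=re^{i\theta}$, hence smooth on $\mathbb{D}$; in particular each eigenvector lies in $H^{1}(\mathbb{D},\C^{2})=\Dom\D^{*}$. The APS-type boundary condition then has to be checked for both $\xi$ and $\D_{P}\xi$ in each family. In every case, either the boundary trace vanishes because $\alpha_{n,k}$ or $\alpha_{n-1,k}$ is a zero of $J_{n}$ or $J_{n-1}$, or the relevant angular mode lies automatically in the kernel of $P_{0}$ or $1-P_{1}$. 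The only delicate case is $n=1$ in the first family, where the second component of $\D_{P}\xi$ acquires the mode $e^{i0\cdot\theta}$, and one needs precisely the fact that $\alpha_{0,k}$ is a zero of $J_{0}$ to conclude that the boundary trace vanishes.

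Completeness of \eqref{eigenvectors} in $L^{2}(\mathbb{D},\C^{2})$ would be established componentwise and angular-mode-by-angular-mode, using the Fourier decomposition $L^{2}(\mathbb{D})\simeq\bigoplus_{m\in\Z}L^{2}([0,1],r\,dr)\otimes\C\cdot e^{im\theta}$. For each $m$ and each of the two components, the available radial system is either the classical Dirichlet system $\{J_{|m|}(r\alpha_{|m|,k})\}_{k\geq 1}$ or the ``shifted'' system $\{J_{|m|}(r\alpha_{|m|-1,k})\}_{k\geq 1}$; the latter is the Sturm--Liouville eigenfunction system for the Bessel equation on $[0,1]$ with the mixed boundary condition $xJ_{|m|}'(x)+|m|J_{|m|}(x)=0$ at $x=1$, equivalent to $J_{|m|-1}(x)=0$ via $J_{\nu-1}(x)=J_{\nu}'(x)+(\nu/x)J_{\nu}(x)$, and is therefore an orthogonal basis of $L^{2}([0,1],r\,dr)$ by standard Sturm--Liouville theory (compare \cite{BoasPollard}). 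Combined with Fourier completeness in $\theta$, this yields the span claim. Since $\alpha_{n,k}^{2}\to\infty$ as $n+k\to\infty$ with finite multiplicity, $(1+\D_{P}^{2})^{-1/2}$ is a norm-limit of finite-rank operators and hence compact. The main obstacle in all of this is the boundary-condition bookkeeping in the previous paragraph, where the angular-mode shifts produced by $\D_{\pm}$ interact delicately with the APS cut-off between modes $0$ and $1$.
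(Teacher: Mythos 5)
Your proof is correct and follows essentially the same route as the paper's: reduce completeness to the two radial Bessel systems $\{J_n(r\alpha_{n,k})\}_{k\geq1}$ and $\{J_n(r\alpha_{n-1,k})\}_{k\geq1}$ via the Fourier decomposition in $\theta$, and invoke Boas--Pollard (equivalently, Dini/Sturm--Liouville theory) for those. The additional material you supply --- the explicit recurrence computations and the verification that the listed vectors genuinely lie in $\dom\D_P^2$ --- is detail the paper omits or defers to the following proposition, not a different argument.
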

\begin{proof} With the measure $rdrd\theta$, we can take $\mathbb{D}=[0,1]\times S^1/\sim$, where $\sim$ is the identification $(0,z)\sim(0,1)$ for $z\in S^1$. It is well known that $\{e^{in\theta}\}_{n=-\infty}^\infty$ is complete for $L^2(S^1)$, so it is enough to show that

(a) $\{r\mapsto J_n(r\alpha_{n-1,k})\}_{k=1}^\infty$ spans $L^2([0,1],r\,dr)$ for all $n=1,2,\ldots$, and

(b) $\{r\mapsto J_n(r\alpha_{n,k})\}_{k=1}^\infty$ spans $L^2([0,1],r\,dr)$ for all $n=0,1,2,\dots$.

Statement (a) is true by \cite[Theorem 6]{BoasPollard}, and (b) is true by \cite[Theorem 2]{BoasPollard}.\footnote{In \cite{BoasPollard}, Boas and Pollard take the usual measure on $[0,1]$ instead of $r\,dr$ and a slightly different set of functions, but it is easy to see that the two approaches are equivalent.} Hence the eigenfunctions above are the entire set of eigenfunctions, and the set of eigenvalues is $\{\alpha_{n,k}^2\}_{n=0,k=1}^\infty$. Each of these eigenvalues has multiplicity 4. In particular
$\D_P$ has no kernel, and
since $\alpha_{n,k}\rightarrow\infty$ as $n,k\rightarrow\infty$, $(1+\D_P^2)^{-1/2}$ is compact.
\end{proof}

To facilitate our computations we now describe an orthonormal eigenbasis for $\D_{P}$.
\begin{prop} The vectors
\begin{align*}
\Ket{1,n,k,\pm}&=\frac{1}{J_n(\alpha_{n-1,k})}\begin{pmatrix}J_n(r\alpha_{n-1,k})e^{-in\theta}\\\pm J_{n-1}(r\alpha_{n-1,k})e^{-i(n-1)\theta}\end{pmatrix},\\
\Ket{2,n,k,\pm}&=\frac{1}{J_n(\alpha_{n-1,k})}\begin{pmatrix}J_{n-1}(r\alpha_{n-1,k})e^{i(n-1)\theta}\\\mp J_n(r\alpha_{n-1,k})e^{in\theta}\end{pmatrix},
\end{align*}
$n,k=1,2,\ldots$.
form a normalised complete set of eigenvectors for $\D_{P}$. The corresponding set of eigenvalues is given by 
\[\D_{P}\Ket{j,n,k,\pm} =\pm \alpha_{n-1,k}\Ket{j,n,k,\pm}.\]
\end{prop}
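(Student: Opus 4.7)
The plan is to verify domain membership and the eigenvalue equation directly for each $\Ket{j,n,k,\pm}$, and then to read off orthonormality from standard Bessel integrals and completeness from Lemma \ref{Dsquare}. The backbone of the eigenvalue computation is the pair of Bessel recurrences
$$
J_{m}'(x)+\frac{m}{x}J_{m}(x)=J_{m-1}(x),\qquad J_{m}'(x)-\frac{m}{x}J_{m}(x)=-J_{m+1}(x).
$$
Writing $\alpha:=\alpha_{n-1,k}$ and applying $\partial_{r}+ir^{-1}\partial_{\theta}$ to $J_{n}(r\alpha)e^{-in\theta}$ yields $\alpha J_{n-1}(r\alpha)e^{-in\theta}$, while applying $-\partial_{r}+ir^{-1}\partial_{\theta}$ to $J_{n-1}(r\alpha)e^{-i(n-1)\theta}$ yields $\alpha J_{n}(r\alpha)e^{-i(n-1)\theta}$; analogous computations handle the terms with the opposite angular dependence appearing in $\Ket{2,n,k,\pm}$. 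Multiplying by $e^{\pm i\theta}$ and assembling into the off-diagonal matrix gives $\D^{*}\Ket{j,n,k,\pm}=\pm\alpha_{n-1,k}\Ket{j,n,k,\pm}$; since $\D_{P}\subset \D^{*}$, this establishes the eigenvalue equation once domain membership is checked.

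For domain membership the decisive fact is that $\alpha_{n-1,k}$ is by definition a zero of $J_{n-1}$. Evaluating $\Ket{1,n,k,\pm}$ at $r=1$ produces first-component boundary value $e^{-in\theta}$ (a pure mode $-n\leq -1$) and second-component boundary value $0$, so both $P_{0}(\xi_{1}|_{\partial\mathbb{D}})=0$ and $(1-P_{1})(\xi_{2}|_{\partial\mathbb{D}})=0$ hold; likewise $\Ket{2,n,k,\pm}$ has first-component boundary value $0$ and second-component boundary value $\mp e^{in\theta}$ (mode $n\geq 1$), again meeting both conditions. The prefactor $1/J_{n}(\alpha_{n-1,k})$ normalises the vectors: using the standard identity $\int_{0}^{1}J_{m}(r\alpha)^{2}\,r\,dr=\tfrac{1}{2}(J_{m}(\alpha)^{2}-J_{m-1}(\alpha)J_{m+1}(\alpha))$ together with the relation $J_{n-1}'(\alpha_{n-1,k})=-J_{n}(\alpha_{n-1,k})$ obtained from the second recurrence at a zero of $J_{n-1}$, both radial integrals reduce to $\tfrac{1}{2}J_{n}(\alpha_{n-1,k})^{2}$, whence $\|\Ket{j,n,k,\pm}\|=1$.

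Orthogonality between eigenvectors at distinct eigenvalues is automatic from self-adjointness of $\D_{P}$; within each two-dimensional eigenspace, $\Ket{1,n,k,\pm}\perp \Ket{2,n,k,\pm}$ is immediate from angular-mode orthogonality on $S^{1}$, since the two vectors involve disjoint Fourier modes. For completeness, the sums and differences $\Ket{1,n,k,+}\pm\Ket{1,n,k,-}$ and $\Ket{2,n,k,+}\pm\Ket{2,n,k,-}$ recover, up to nonzero scalars, precisely the four eigenvectors of $\D_{P}^{2}$ at eigenvalue $\alpha_{n-1,k}^{2}$ enumerated in Lemma \ref{Dsquare} (two from the first family indexed by $(n,k)$, two from the second family indexed by $(n-1,k)$), so completeness of $\{\Ket{j,n,k,\pm}\}$ follows from completeness of the $\D_{P}^{2}$-eigenbasis already established there. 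The main obstacle I anticipate is not any single computation but the bookkeeping of the index shift $n\leftrightarrow n-1$ relating the two families of $\D_{P}^{2}$-eigenvectors to the eigenvectors of $\D_{P}$: one has to check carefully that every $\D_{P}^{2}$-eigenvector is recovered exactly once, with the correct sign conventions so that the $\pm\alpha_{n-1,k}$ labels are not double-counted.
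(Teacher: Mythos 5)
Your proposal is correct and follows essentially the same route as the paper: verify the eigenvalue equation and membership in $\dom\D_{P}$ directly, normalise via the standard Bessel integrals $\int_0^1 J_m(r\alpha_{n-1,k})^2\,r\,dr=\tfrac12 J_n(\alpha_{n-1,k})^2$, and deduce completeness by recovering the spanning set of Lemma \ref{Dsquare} from linear combinations of the $\Ket{j,n,k,\pm}$. You supply more of the routine detail (the explicit recurrences and the check of the APS boundary conditions, which the paper leaves implicit), but the argument is the same.
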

\begin{proof}
From Lemma \ref{Dsquare} it is straightforward to show that the eigenvectors and eigenvalues of $\D_P$ are
\begin{align*}
\D_P\begin{pmatrix}J_n(r\alpha_{n-1,k})e^{-in\theta}\\\pm J_{n-1}(r\alpha_{n-1,k})e^{-i(n-1)\theta}\end{pmatrix}&=\pm\alpha_{n-1,k}\begin{pmatrix}J_n(r\alpha_{n-1,k})e^{-in\theta}\\\pm J_{n-1}(r\alpha_{n-1,k})e^{-i(n-1)\theta}\end{pmatrix},\\
\D_P\begin{pmatrix}J_{n-1}(r\alpha_{n-1,k})e^{i(n-1)\theta}\\\mp J_n(r\alpha_{n-1,k})e^{in\theta}\end{pmatrix}&=\pm\alpha_{n-1,k}\begin{pmatrix}J_{n-1}(r\alpha_{n-1,k})e^{i(n-1)\theta}\\\mp J_n(r\alpha_{n-1,k})e^{in\theta}\end{pmatrix},
\end{align*}
for $n,k=1,2,\ldots$. Note that these eigenvectors are complete for $L^2(\mathbb{D},\mathbb{C}^2)$ since we can recover our spanning set \eqref{eigenvectors} from linear combinations of these. 

To normalise these eigenvectors, we use the following standard integrals which can be found in \cite{W}:
\begin{align*}
&\Ideal{\begin{pmatrix}J_n(r\alpha_{n-1,k})e^{-in\theta}\\\pm J_{n-1}(r\alpha_{n-1,k})e^{-i(n-1)\theta}\end{pmatrix},\begin{pmatrix}J_n(r\alpha_{n-1,k})e^{-in\theta}\\\pm J_{n-1}(r\alpha_{n-1,k})e^{-i(n-1)\theta}\end{pmatrix}}\\
&=\frac{1}{2\pi}\int_0^{2\pi}\!\!\!\int_0^1(J_n^2(r\alpha_{n-1,k})+J_{n-1}^2(r\alpha_{n-1,k}))r\,dr\,d\theta\\
&=\frac{1}{2}\left(J_n^2(\alpha_{n-1,k})+J_n^2(\alpha_{n-1,k})\right)=J_n^2(\alpha_{n-1,k}),
\end{align*}
and similarly
\begin{align*}
\Ideal{\begin{pmatrix}J_{n-1}(r\alpha_{n-1,k})e^{i(n-1)\theta}\\\pm J_n(r\alpha_{n-1,k})e^{in\theta}\end{pmatrix},\begin{pmatrix}J_{n-1}(r\alpha_{n-1,k})e^{i(n-1)\theta}\\\pm J_n(r\alpha_{n-1,k})e^{in\theta}\end{pmatrix}}&=J_n^2(\alpha_{n-1,k}).\qedhere
\end{align*}
\end{proof}

Our purpose is to find a function $a\in C(\mathbb{D})$ for which the commutator $[F,a]$ is not compact, where $F=\D_P(1+\D_P^2)^{-1/2}$ is the bounded transform. Let $P_+$ be the non-negative spectral projection associated to $\D_P$, and let $P_-=1-P_+$. By Lemma \ref{lem:compact-comms}, we need only show that there is some $a\in C(\mathbb{D})$
for which the operator $P_+aP_-$ is not compact.

In terms of the eigenbasis of $\D_P$, for any $a\in C(\mathbb{D})$ we can write
\begin{align}\label{pap}
P_+aP_-&=\sum_{i,j=1,2}\sum_{n,m,k,\ell=1}^\infty
\Ket{i,n,k,+}\Bra{i,n,k,+}a\Ket{j,m,\ell,-}\Bra{j,m,\ell,-}.
\end{align}
Now we fix $a=re^{-i\theta}$. The function $re^{-i\theta}$ generates 
$C(\mathbb{D})$ (along with the constant function 1), and 
fails to preserve the domain of $\D_P$; for instance $re^{-i\theta}\cdot\Ket{2,1,k,\pm}\notin\dom(\D_P)$. To show that $P_+re^{-i\theta}P_-$ is not compact, we will construct a bounded sequence of vectors $\xi_{n}$, with the property that $P_{+}re^{-i\theta}P_{-}$ maps $\xi_n$ to a sequence with no convergent 
subsequences. In order to find the sequence $\xi_{n}$, we first derive an explicit formula for $P_{+}re^{-i\theta}P_{-}$.

\begin{lemma}The operator $P_+re^{-i\theta}P_-$ can be expressed as
\begin{align*}
P_+re^{-i\theta}P_-&=\sum_{m,k,\ell=1}^\infty\frac{2\alpha_{m,k}}{(\alpha_{m,k}-\alpha_{m-1,\ell})(\alpha_{m,k}+\alpha_{m-1,\ell})^2}\Ket{1,m+1,k,+}\Bra{1,m,\ell,-}\\
&\quad+\sum_{n,k,\ell=1}^\infty\frac{2\alpha_{n,\ell}}{(\alpha_{n-1,k}-\alpha_{n,\ell})(\alpha_{n,\ell}+\alpha_{n-1,k})^2}\Ket{2,n,k,+}\Bra{2,n+1,\ell,-}\\
&\quad+\sum_{k\neq\ell}\frac{1}{\alpha_{0,k}+\alpha_{0,\ell}}\Ket{1,1,k,+}\Bra{2,1,\ell,-}+\sum_{k=1}^\infty\frac{1}{\alpha_{0,k}}\Ket{1,1,k,+}\Bra{2,1,k,-}.
\end{align*}
\end{lemma}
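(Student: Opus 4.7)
The plan is to expand $P_+re^{-i\theta}P_-$ in the eigenbasis via equation \eqref{pap} with $a=re^{-i\theta}$, and to compute the matrix elements $\Bra{i,n,k,+}re^{-i\theta}\Ket{j,m,\ell,-}$ for each of the four choices of $(i,j)\in\{1,2\}\times\{1,2\}$. The computation separates cleanly into an angular and a radial part, and the four sums in the claimed formula will turn out to correspond to the four choices of $(i,j)$.

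First I would dispose of most of the terms using angular selection. Each of the two components of $\Ket{j,m,\ell,-}$ has angular dependence $e^{ip\theta}$ for an integer $p$ read off from the definition of the eigenvectors; multiplication by $e^{-i\theta}$ shifts all such phases by $-1$; and the angular integral $\frac{1}{2\pi}\int_0^{2\pi}e^{iq\theta}\,d\theta=\delta_{q,0}$ forces the shifted phase to match the corresponding phase of the bra, separately in the top and bottom components. A short bookkeeping check on the four phases per case yields the selection rules: the $(1,1)$ sector survives only when $n=m+1$, the $(2,2)$ sector only when $m=n+1$, the $(1,2)$ sector only when $n=m=1$, and the $(2,1)$ sector never. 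This matches exactly the index pattern of the four sums in the statement.

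Next I would evaluate the surviving radial integrals. After factoring out the normalisations $1/J_n(\alpha_{n-1,k})$, every such integral is a sum of terms of the form $\int_0^1 J_p(\alpha r)J_q(\beta r)r^2\,dr$ with $|p-q|=1$ and $\alpha,\beta\in\{\alpha_{n-1,k},\alpha_{m-1,\ell}\}$. These Lommel-type integrals reduce to closed form by combining the recurrences $\frac{d}{dx}[x^{-p}J_p(x)]=-x^{-p}J_{p+1}(x)$ and $J_{p-1}(x)+J_{p+1}(x)=(2p/x)J_p(x)$ with the standard two-parameter identity
\[
\int_0^1 J_p(\alpha r)J_p(\beta r)r\,dr=\frac{\alpha J_p(\beta)J_p'(\alpha)-\beta J_p(\alpha)J_p'(\beta)}{\beta^2-\alpha^2}\qquad(\alpha\neq\beta).
\]
Because $J_p(\alpha_{p,k})=0$ and hence $J_{p+1}(\alpha_{p,k})=-J_p'(\alpha_{p,k})$, substantial cancellation occurs at the boundary $r=1$, and the residual Bessel values produced there cancel against the normalisation factors $J_n(\alpha_{n-1,k})$ in the eigenvectors, leaving exactly the rational expressions in $\alpha_{m,k}$ and $\alpha_{m-1,\ell}$ (with denominators $(\alpha_{m,k}\mp\alpha_{m-1,\ell})$) displayed in the lemma.

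The main obstacle is the $(1,2)$ sector, where $n=m=1$ forces both components of $\Ket{1,1,k,+}$ to pair simultaneously with both components of $\Ket{2,1,\ell,-}$, so two distinct Lommel integrals must be added and the cross-cancellation carried out explicitly. Moreover, the subcase $k=\ell$ requires a coincident-eigenvalue (l'Hopital-type) limit of the Lommel identity, and this is exactly what produces the isolated sum with coefficient $1/\alpha_{0,k}$ that sits alongside the off-diagonal $1/(\alpha_{0,k}+\alpha_{0,\ell})$ contribution. Once these two subcases are treated separately and combined with the $(1,1)$ and $(2,2)$ sectors obtained above, the four sums in the statement are assembled and the lemma follows.
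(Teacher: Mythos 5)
Your proposal follows essentially the same route as the paper's proof: expand via \eqref{pap}, use the angular integrals to derive the selection rules ($n=m+1$ for $(1,1)$, $m=n+1$ for $(2,2)$, $n=m=1$ for $(1,2)$, vanishing for $(2,1)$), and evaluate the surviving radial integrals by standard Bessel/Lommel identities, with the coincident subcase $k=\ell$ in the $(1,2)$ sector treated separately to yield the $1/\alpha_{0,k}$ terms. The paper likewise only invokes integration by parts and the standard recursions from Watson for the radial integrals, so your level of detail matches its proof.
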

\begin{proof} In view of Equation \eqref{pap}, we first compute the operators  $\Bra{i,n,k,+}re^{-i\theta}\Ket{j,m,\ell,-}$ for $i,j=1,2$.
Using integration by parts and standard recursion relations and 
identities for the Bessel functions and their derivatives, \cite{W}, we find:
\begin{enumerate}
\item Case $i=j=1$:
\begin{align*}
&\Bra{1,n,k,+}re^{-i\theta}\Ket{1,m,\ell,-}\\
&=\frac{1}{2\pi J_n(\alpha_{n-1,k})J_m(\alpha_{m-1,\ell})}\int_0^{2\pi}\!\!\!\int_0^1r^2e^{i(n-m-1)\theta}\big(J_n(r\alpha_{n-1,k})J_m(r\alpha_{m-1,\ell})\\
&\qquad\qquad\qquad\qquad\qquad\qquad\qquad\qquad-J_{n-1}(r\alpha_{n-1,k})J_{m-1}(r\alpha_{m-1,\ell})\big)\,dr\,d\theta\\
&=\frac{\delta_{n,m+1}}{J_{m+1}(\alpha_{m,k})J_m(\alpha_{m-1,\ell})}\int_0^1r^2J_{m+1}(r\alpha_{m,k})J_m(r\alpha_{m-1,\ell})-r^2J_m(r\alpha_{m,k})J_{m-1}(r\alpha_{m-1,\ell})\,dr\\
&=\frac{2\alpha_{m,k}\delta_{n,m+1}}{(\alpha_{m,k}-\alpha_{m-1,\ell})(\alpha_{m,k}+\alpha_{m-1,\ell})^2};
\end{align*}
\item Case $i=1,j=2$:
\begin{align*}
&\Bra{1,n,k,+}re^{-i\theta}\Ket{2,m,\ell,-}\\
&=\frac{1}{2\pi J_n(\alpha_{n-1,k})J_m(\alpha_{m-1,\ell})}\int_0^{2\pi}\!\!\!\int_0^1r^2e^{i(m+n-2)\theta}\big(J_n(r\alpha_{n-1,k})J_{m-1}(r\alpha_{m-1,\ell})\\
&\qquad\qquad\qquad\qquad\qquad\qquad\qquad\qquad+J_{n-1}(r\alpha_{n-1,k})J_m(r\alpha_{m-1,\ell})\big)\,dr\,d\theta\\
&=\left\{\begin{array}{cl}\frac{1}{J_1(\alpha_{0,k})J_1(\alpha_{0,\ell})}\int_0^1r^2J_1(r\alpha_{0,k})J_0(r\alpha_{0,\ell})+r^2J_0(r\alpha_{0,k})J_1(r\alpha_{0,\ell})\,dr&n=m=1\\0&\text{otherwise}\end{array}\right.\\
&=\left\{\begin{array}{cl}\frac{1}{\alpha_{0,k}+\alpha_{0,\ell}}&n=m=1\text{ and }k\neq\ell\\\frac{1}{\alpha_{0,k}}&n=m=1\text{ and }k=\ell\\0&\text{otherwise;}\end{array}\right.
\end{align*}
\item Case $i=2,j=1$:
\begin{align*}
&\Bra{2,n,k,+}re^{-i\theta}\Ket{1,m,\ell,-}\\
&=\frac{1}{2\pi J_n(\alpha_{n-1,k})J_m(\alpha_{m-1,\ell})}\int_0^{2\pi}\!\!\!\int_0^1r^2e^{-i(n+m)\theta}\big(J_{n-1}(r\alpha_{n-1,k})J_m(r\alpha_{m-1,\ell})\\
&\qquad\qquad\qquad\qquad\qquad\qquad\qquad\qquad+J_n(r\alpha_{n-1,k})J_{m-1}(r\alpha_{m-1,k\ell})\big)\,dr\,d\theta\\
&=0;
\end{align*}
\item Case $i=j=2$:
\begin{align*}
&\Bra{2,n,k,+}re^{-i\theta}\Ket{2,m,\ell,-}\\
&=\frac{1}{2\pi J_n(\alpha_{n-1,k})J_m(\alpha_{m-1,\ell})}\int_0^{2\pi}\!\!\!\int_0^1r^2e^{i(m-n-1)\theta}\big(J_{n-1}(r\alpha_{n-1,k})J_{m-1}(r\alpha_{m-1,\ell})\\
&\qquad\qquad\qquad\qquad\qquad\qquad\qquad\qquad-J_n(r\alpha_{n-1,k})J_m(r\alpha_{m-1,\ell})\big)\,dr\,d\theta\\
&=\frac{\delta_{m,n+1}}{J_n(\alpha_{n-1,k})J_{n+1}(\alpha_{n,\ell})}\int_0^1r^2J_{n-1}(r\alpha_{n-1,k})J_n(r\alpha_{n,\ell})-r^2J_n(r\alpha_{n-1,k})J_{n+1}(r\alpha_{n,\ell})\,dr\\
&=\frac{2\alpha_{n,\ell}\delta_{m,n+1}}{(\alpha_{n-1,k}-\alpha_{n,\ell})(\alpha_{n,\ell}+\alpha_{n-1,k})^2}.
\end{align*}
\end{enumerate}
The desired equation is now obtained by using these cases in combination with \eqref{pap}.
\end{proof}
For convenience we write 
$$\Ket{\ell,-}:=\Ket{2,1,\ell,-},\qquad\Ket{k,+}:=\Ket{1,1,k,+},$$
and define the sequence
$$\xi_n:=\sum_{\ell=1}^\infty\frac{\sqrt{n}}{n+\ell}\Ket{\ell,-},\quad n=1,2,\ldots.$$
\begin{lemma}\label{lem:boundedsequence}
The sequence $\{\xi_n\}_{n=1}^\infty$ is bounded.
\end{lemma}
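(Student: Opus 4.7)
The plan is to exploit orthonormality of the eigenbasis $\{\Ket{j,n,k,\pm}\}$ of $\D_P$ to reduce the norm computation to a scalar sum that has already been handled in the finite deficiency case of Lemma \ref{lem:boundedsequence1}.

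First I would observe that the vectors $\Ket{\ell,-} = \Ket{2,1,\ell,-}$ (for $\ell = 1, 2, \ldots$) are normalised eigenvectors of the self-adjoint operator $\D_P$ with eigenvalues $-\alpha_{0,\ell}$. Since the $\alpha_{0,\ell}$ are distinct positive zeros of $J_0$, these eigenvalues are pairwise distinct, so the vectors $\Ket{\ell,-}$ form an orthonormal system in $L^2(\mathbb{D}, \mathbb{C}^2)$. Consequently,
\begin{equation*}
\|\xi_n\|^2 = \sum_{\ell=1}^\infty \frac{n}{(n+\ell)^2}.
\end{equation*}

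Next I would recognise this as precisely the scalar sum that appeared in the proof of Lemma \ref{lem:boundedsequence1}. By that computation,
\begin{equation*}
\sum_{\ell=1}^\infty \frac{n}{(n+\ell)^2} = n\, \psi^{(1)}(n+1),
\end{equation*}
where $\psi^{(1)}$ is the trigamma function. Using the asymptotic $(n+1)\psi^{(1)}(n+1) \to 1$ as $n \to \infty$, one concludes that $\|\xi_n\|^2 \to 1$, so in particular the sequence $\{\xi_n\}_{n=1}^\infty$ is bounded.

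There is really no obstacle here; the only thing to check carefully is that the $\Ket{\ell,-}$ are genuinely orthonormal (which follows from self-adjointness of $\D_P$ together with the normalisation computation at the end of the preceding proposition), so that Parseval immediately reduces the norm to the scalar series already analysed in the interval example.
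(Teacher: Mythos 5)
Your proof is correct and follows essentially the same route as the paper: orthonormality of the eigenvectors $\Ket{\ell,-}$ reduces $\|\xi_n\|^2$ to the scalar sum $n\sum_{\ell\geq 1}(n+\ell)^{-2}=n\,\psi^{(1)}(n+1)$, which tends to $1$. The paper simply cites the identical computation from Lemma \ref{lem:boundedsequence1}; your added remark justifying orthonormality via self-adjointness and distinctness of the eigenvalues is a harmless (and correct) elaboration.
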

\begin{proof}
As in Lemma \ref{lem:boundedsequence1} we have
\begin{align*}
\|\xi_n\|^2=n\sum_{\ell=1}^\infty\frac{1}{(n+\ell)^2}=n\psi^{(1)}(n+1),
\end{align*}
where $\psi^{(m)}(x)=(d^{m+1}/dx^{m+1})(\log(\Gamma))(x)$ is the polygamma 
function of order $m$. As $n\rightarrow\infty$, $(n+1)\psi^{(1)}(n+1)\to 1$, so $\|\xi_n\|^2\to 1$.
\end{proof}
 To simplify the computations, we subtract the operator
$$
K:=\sum_{k=1}^\infty\frac{1}{2\alpha_{0,k}}\Ket{1,1,k,+}\Bra{2,1,k,-}
$$
from $P_+re^{-i\theta}P_-$, since $K$ is obviously compact. 
To this end, define
$$\zeta_n:=(P_+re^{-i\theta}P_--K)\xi_n.$$
Our purpose is to show that $\zeta_{n}$ has no convergent subsequence. To this end we investigate its limiting behaviour.
\begin{lemma}\label{lem:liminf}
$$\liminf_{n\rightarrow\infty}\|\zeta_n\|\geq\frac{1}{2\pi}.$$
\end{lemma}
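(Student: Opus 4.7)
The plan is to compute $\zeta_n$ explicitly in the orthonormal eigenbasis of $\D_P$ and then reduce the norm estimate to a standard calculation involving the zeros of $J_0$ and harmonic sums. First, applying $\xi_n=\sum_\ell\tfrac{\sqrt n}{n+\ell}\Ket{\ell,-}$ to the four-term expression for $P_+re^{-i\theta}P_-$ supplied by the previous Lemma, only the last two summands contribute: the first two have ``bras'' $\Bra{1,m,\ell,-}$ or $\Bra{2,n'+1,\ell,-}$ with $n'\geq 1$, each of which is orthogonal to every $\Ket{2,1,\ell',-}$ appearing in $\xi_n$. Subtracting $K$ changes the diagonal weight $\frac{1}{\alpha_{0,k}}$ to $\frac{1}{2\alpha_{0,k}}=\frac{1}{\alpha_{0,k}+\alpha_{0,k}}$, amalgamating the cases $k=\ell$ and $k\neq\ell$ into a single uniform expression
\begin{equation*}
\zeta_n=\sum_{k=1}^\infty c_{k,n}\Ket{1,1,k,+},\qquad c_{k,n}=\sum_{\ell=1}^\infty\frac{\sqrt n}{(n+\ell)(\alpha_{0,k}+\alpha_{0,\ell})}.
\end{equation*}
Since $\{\Ket{1,1,k,+}\}$ is orthonormal, $\|\zeta_n\|^2=\sum_{k\geq 1}c_{k,n}^2$.

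The next step is to replace the Bessel zeros by a cleaner envelope. The classical bound $\alpha_{0,k}<\pi k$, valid for every $k\geq 1$ (by Sturm comparison, or equivalently by the McMahon asymptotic $\alpha_{0,k}=\pi(k-\tfrac14)+O(k^{-1})$ together with inspection of small $k$), gives $\alpha_{0,k}+\alpha_{0,\ell}<\pi(k+\ell)$ and hence $c_{k,n}\geq\frac{\sqrt n}{\pi}\sum_{\ell=1}^\infty\frac{1}{(n+\ell)(k+\ell)}$. For $1\leq k<n$, the partial fraction identity $\frac{1}{(n+\ell)(k+\ell)}=\frac{1}{n-k}\bigl(\frac{1}{k+\ell}-\frac{1}{n+\ell}\bigr)$ and telescoping yield
\begin{equation*}
\sum_{\ell=1}^\infty\frac{1}{(n+\ell)(k+\ell)}=\frac{1}{n-k}\sum_{j=k+1}^{n}\frac{1}{j}\geq\frac{1}{n-k}\cdot\frac{n-k}{n}=\frac{1}{n},
\end{equation*}
so that $c_{k,n}\geq\frac{1}{\pi\sqrt n}$ uniformly in $k\in\{1,\dots,n-1\}$.

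Discarding the contributions from $k\geq n$ (which only decreases the sum of squares) gives
\begin{equation*}
\|\zeta_n\|^2\geq\sum_{k=1}^{n-1}c_{k,n}^2\geq\frac{n-1}{\pi^2 n}\longrightarrow\frac{1}{\pi^2},
\end{equation*}
and hence $\liminf_{n\to\infty}\|\zeta_n\|\geq\frac{1}{\pi}>\frac{1}{2\pi}$, which is in fact stronger than the claimed bound. The main difficulty is really just the bookkeeping in the first step: once the cancellations are identified and $c_{k,n}$ has been put in its clean symmetric form, the remainder mimics the finite-deficiency-index calculation \eqref{eq:nail-in-coffin}, with the elementary Bessel-zero estimate $\alpha_{0,k}<\pi k$ as the only analytic input beyond standard manipulations of harmonic sums.
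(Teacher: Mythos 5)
Your proof is correct, and it follows the same overall strategy as the paper's: expand $\zeta_n$ in the orthonormal eigenbasis (after observing that only the last two summands of $P_+re^{-i\theta}P_-$ survive and that subtracting $K$ symmetrizes the diagonal term), lower-bound the coefficients $c_{k,n}$ using an upper bound on the Bessel zeros, discard the tail $k\geq n$, and finish with the harmonic-sum estimate $\sum_{j=k+1}^{n}\frac{1}{j}\geq\frac{n-k}{n}$. The one genuine difference is in the middle step: the paper replaces only $\alpha_{0,\ell}$ by $\pi(\ell-1/8)$ and keeps $\alpha_{0,k}$ exact, which forces it through digamma identities $\psi^{(0)}(n+1)-\psi^{(0)}(\alpha_{0,k}/\pi+7/8)$ and a further round of monotonicity estimates, whereas you bound both zeros at once via $\alpha_{0,k}+\alpha_{0,\ell}<\pi(k+\ell)$ (which indeed follows from the cited bound \eqref{eq:bounds}, since $\alpha_{0,k}<\pi(k-1/8)<\pi k$). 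This symmetric bound reduces everything to the rational telescoping sum $\frac{1}{n-k}\sum_{j=k+1}^{n}\frac{1}{j}$, shortens the computation considerably, and yields the sharper conclusion $\liminf\|\zeta_n\|\geq\frac{1}{\pi}$ rather than $\frac{1}{2\pi}$; it also makes the argument run exactly parallel to the unit-interval estimate \eqref{eq:nail-in-coffin}. The only cosmetic caveat is that you should cite the bound \eqref{eq:bounds} (or \cite{LM}) for $\alpha_{0,k}<\pi k$ rather than leaving "Sturm comparison" as an aside, but the claim itself is correct and elementary.
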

\begin{proof}
We have
\begin{align*}
\zeta_n&=\sum_{k,\ell=1}^\infty\frac{\sqrt{n}}{(n+\ell)(\alpha_{0,k}+\alpha_{0,\ell})}\Ket{k,+}.
\end{align*}
It is proved in \cite[Lemma 1]{LM} that for all $\ell\geq 1$,
\begin{align}\label{eq:bounds}
\pi(\ell-1/4)<\alpha_{0,\ell}<\pi(\ell-1/8),
\end{align}
yielding the inequality
\begin{align*}
\frac{\sqrt{n}}{(n+\ell)(\alpha_{0,k}+\alpha_{0,\ell})}>\frac{\sqrt{n}}{(n+\ell)(\alpha_{0,k}+\pi(\ell-1/8))}.
\end{align*}
This allows us to estimate the coefficients of $\zeta_n$ via
\begin{align*}
\sum_{\ell=1}^\infty\frac{\sqrt{n}}{(n+\ell)(\alpha_{0,k}+\alpha_{0,\ell})}&\geq\sum_{\ell=1}^\infty\frac{\sqrt{n}}{(n+\ell)(\alpha_{0,k}+\pi(\ell-1/8))}\\
&=\frac{\sqrt{n}}{\pi(n-\alpha_{0,k}/\pi+1/8)}\sum_{\ell=1}^\infty\left(\frac{1}{\ell+\alpha_{0,k}/\pi-1/8}-\frac{1}{n+\ell}\right)\\
&=\frac{\sqrt{n}}{\pi(n-\alpha_{0,k}/\pi+1/8)}\sum_{\ell=1}^\infty\left(\frac{1}{\ell+\alpha_{0,k}/\pi-1/8}-\frac{1}{\ell}+\frac{1}{\ell}-\frac{1}{\ell+n}\right)\\
&=\frac{\sqrt{n}}{\pi(n-\alpha_{0,k}/\pi+1/8)}\left(-\psi^{(0)}(\alpha_{0,k}/\pi+7/8)+\psi^{(0)}(n+1)\right)\\
&=\frac{\sqrt{n}}{\pi}\frac{\psi^{(0)}(n+1)-\psi^{(0)}(\alpha_{0,k}/\pi+7/8)}{n-\alpha_{0,k}/\pi+1/8}
\end{align*}
which allows us to bound $\Vert\zeta_n\Vert$ by
\begin{align}\label{eq:lowerbound1}
\|\zeta_n\|^2&\geq\frac{n}{\pi^2}\sum_{k=1}^\infty\left(\frac{\psi^{(0)}(n+1)-\psi^{(0)}(\alpha_{0,k}/\pi+7/8)}{n-\alpha_{0,k}/\pi+1/8}\right)^2\nonumber\\
&\geq \frac{n}{\pi^2}\sum_{k=1}^n\left(\frac{\psi^{(0)}(n+1)-\psi^{(0)}(\alpha_{0,k}/\pi+7/8)}{n-\alpha_{0,k}/\pi+1/8}\right)^2.
\end{align}
Now, $\alpha_{0,k}/\pi\in(k-1/4,k-1/8)$ by Equation \eqref{eq:bounds}, and $\psi^{(0)}$ increases monotonically on $(0,\infty)$, so for $k\leq n$ we have
\begin{align*}
0\leq\!\psi^{(0)}(n+1)-\psi^{(0)}(k+1)\!<\!\psi^{(0)}(n+1)-\psi^{(0)}(k+3/4)\!<\!\psi^{(0)}(n+1)-\psi^{(0)}(\alpha_{0,k}/\pi+7/8).
\end{align*}
For $k\leq n$,
$$\psi^{(0)}(n+1)-\psi^{(0)}(k+1)=\sum_{j=0}^{n-k-1}\frac{1}{k+j+1},$$
and so
$$0\leq\sum_{j=0}^{n-k-1}\frac{1}{k+j+1}<\psi^{(0)}(n+1)-\psi^{(0)}(\alpha_{0,k}/\pi+7/8).$$
For $k\leq n$ we also have
$$0<n-\alpha_{0,k}/\pi+1/8<n-k+3/8,$$
allowing us to obtain the estimate
\begin{align}
&\sum_{k=1}^{n}\left(\frac{\psi^{(0)}(n+1)-\psi^{(0)}(\alpha_{0,k}/\pi+7/8)}{n-\alpha_{0,k}/\pi+1/8}\right)^2>\sum_{k=1}^{n}\frac{1}{(n-k+3/8)^2}\left(\sum_{j=0}^{n-k-1}\frac{1}{k+j+1}\right)^2\nonumber\\
&\geq\sum_{k=1}^{n}\frac{1}{(n-k+3/8)^2}\cdot\left(\frac{n-k}{k+(n-k-1)+1}\right)^2
=\sum_{k=1}^{n}\frac{(n-k)^2}{(n-k+3/8)^2}\frac{1}{n^2}\nonumber\\
&\geq\sum_{k=1}^{n}\frac{(n-k)^2}{(n-k+1)^2}\frac{1}{n^2}
=\frac{1}{n^2}\sum_{j=2}^{n}\frac{(j-1)^2}{j^2}\geq \frac{1}{n^2}\frac{n-1}{4}.
\label{eq:lowerbound2}
\end{align}
Thus combining Equations \eqref{eq:lowerbound1} and \eqref{eq:lowerbound2}   yields
\begin{align}
\|\zeta_n\|^2\geq \frac{n}{\pi^2}\sum_{k=1}^{n}\left(\frac{\psi^{(0)}(n+1)-\psi^{(0)}(\alpha_{0,k}/\pi+7/8)}{n-\alpha_{0,k}/\pi+1/8}\right)^2\geq\frac{n-1}{4n\pi^2}.
\end{align}
As $n\rightarrow\infty$,
\begin{align*}
\liminf_{n\rightarrow\infty}\|\zeta_n\|^2
\geq \frac{1}{4\pi^2}
&&&\qedhere
\end{align*}
\end{proof}
Next we analyse the possible limits of convergent subsequences of $\zeta_{n}$, should they exist.
\begin{lemma}\label{lem:convergezero}
If $\{\zeta_n\}_{n=1}^\infty$ has a norm convergent subsequence $\{\zeta_{n_j}\}_{j=1}^\infty$, then $\zeta_{n_j}\rightarrow0$.
\end{lemma}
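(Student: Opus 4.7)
The plan is to mirror the strategy of Lemma \ref{lem:convergezero1}: show that $\zeta_n \rightharpoonup 0$ weakly in $\H$, so that any norm-convergent subsequence is automatically forced to have limit zero. The sequence $\{\zeta_n\}$ is norm-bounded, since $\{\xi_n\}$ is bounded by Lemma \ref{lem:boundedsequence} and $P_+re^{-i\theta}P_- - K$ is a bounded operator.

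I would extract from Lemma \ref{lem:liminf} the expansion
\begin{align*}
\zeta_n = \sum_{k=1}^\infty c_k^{(n)}\,\Ket{k,+},\qquad c_k^{(n)} := \sum_{\ell=1}^\infty \frac{\sqrt{n}}{(n+\ell)(\alpha_{0,k}+\alpha_{0,\ell})},
\end{align*}
so each $\zeta_n$ lies in the closed subspace $V := \ol{\Span}\{\Ket{k,+}:k\geq 1\}$, on which $\{\Ket{j,+}\}_{j\geq 1}$ is an orthonormal basis. Given the uniform norm bound on $\{\zeta_n\}$, weak convergence to zero is equivalent to $\Ideal{\zeta_n|j,+} = c_j^{(n)} \to 0$ for every fixed $j\geq 1$, because inner products of $\zeta_n$ against any vector orthogonal to $V$ vanish identically.

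The key estimate is then the pointwise decay $c_j^{(n)} \to 0$ for each $j$. Setting $a_j := \alpha_{0,j}/\pi - 1/4 > 0$ (positive by \eqref{eq:bounds}), and applying the lower bound $\alpha_{0,\ell} > \pi(\ell - 1/4)$ from the same equation, one obtains
\begin{align*}
0 \leq c_j^{(n)} < \frac{\sqrt{n}}{\pi}\sum_{\ell=1}^\infty \frac{1}{(n+\ell)(\ell+a_j)} = \frac{\sqrt{n}}{\pi}\cdot\frac{\psi^{(0)}(n+1)-\psi^{(0)}(a_j+1)}{n-a_j},
\end{align*}
where the closed form follows from the partial-fraction identity $\frac{1}{(n+\ell)(\ell+a)} = \frac{1}{n-a}\bigl(\frac{1}{\ell+a} - \frac{1}{\ell+n}\bigr)$ already used in Lemma \ref{lem:liminf}. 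Since $\psi^{(0)}(n+1)\sim\log n$, this upper bound is of order $\log n/\sqrt{n}$, and hence tends to zero.

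Combining uniform boundedness with $c_j^{(n)}\to 0$ for each $j$ yields $\zeta_n\rightharpoonup 0$; a norm-convergent subsequence $\zeta_{n_j}\to\zeta$ then also converges weakly to the same limit, forcing $\zeta=0$. The argument contains no real obstacle: all the delicate work is already done in Lemma \ref{lem:liminf}, and here we simply recycle the partial-fraction/digamma calculation with an upper bound rather than a lower bound, where the extra factor of $\sqrt{n}/n$ in the resulting estimate now provides decay rather than being exactly cancelled.
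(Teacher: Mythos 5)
Your proposal is correct and follows essentially the same route as the paper: both arguments show that the coefficients $\Ideal{\zeta_n\,|\,k,+}$ tend to zero for each fixed $k$ by bounding $\alpha_{0,\ell}$ from below via \eqref{eq:bounds} and evaluating the resulting sum with the partial-fraction/digamma identity, yielding an upper bound of order $\log n/\sqrt{n}$. The only cosmetic differences are that you keep $\alpha_{0,j}$ exact where the paper bounds both zeros, and that you phrase the conclusion via weak convergence rather than directly passing to the limit along the norm-convergent subsequence.
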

\begin{proof}
We show that $\lim_{n\rightarrow\infty}\Ideal{\zeta_n\,|k,+}=0$ for all $k=1,2,\ldots$, which shows that if $\zeta_{n_j}\rightarrow\zeta$, then $\zeta=0$. We have
\begin{align*}
\Ideal{\zeta_n\,|\,k,+}=\sum_{\ell=1}^\infty\frac{\sqrt{n}}{(n+\ell)(\alpha_{0,k}+\alpha_{0,\ell})}
\end{align*}
Since $\alpha_{0,k}\in(\pi k-\pi/4,\pi k-\pi/8)$ by Equation \eqref{eq:bounds}, we have
$$\frac{1}{\alpha_{0,k}+\alpha_{0,\ell}}<\frac{1}{\pi(k+\ell-1/2)}.$$
Hence
\begin{align*}
0\leq\Ideal{\zeta_n|k,+}&\leq\frac{\sqrt{n}}{\pi}\sum_{\ell=1}^\infty\frac{1}{(n+\ell)(k+\ell-1/2)}
=\frac{\sqrt{n}}{\pi(n-k+1/2)}\sum_{\ell=1}^\infty\left(\frac{1}{k+\ell-1/2}-\frac{1}{n+\ell}\right)\\
&=\frac{\sqrt{n}}{\pi(n-k+1/2)}\left(\psi^{(0)}(n+1)-\psi^{(0)}(k+1/2)\right).
\end{align*}
As $n\rightarrow\infty$, $\psi^{(0)}(n)\sim\ln(n)$. Hence
\begin{align*}
\lim_{n\rightarrow\infty}\frac{\sqrt{n}}{\pi(n-k+1/2)}\left(\psi^{(0)}(n+1)-\psi^{(0)}(k+1/2)\right)&=\lim_{n\rightarrow\infty}\left(\frac{\sqrt{n}(\ln(n+1)-\psi^{(0)}(k+1/2))}{\pi(n-k+1/2)}\right)\\
&=0.
\end{align*}
Hence $\lim_{n\rightarrow\infty}\Ideal{\zeta_n\,|\,k,+}=0$.
\end{proof}
\begin{corl} The sequence
$\{\zeta_n\}_{n=1}^\infty$ has no norm convergent subsequences.
\end{corl}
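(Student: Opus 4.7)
The plan is simply to combine the two preceding lemmas to get a contradiction, exactly mirroring the structure of the analogous corollary in subsection \ref{subsec:finite} (the one that followed Lemma \ref{lem:convergezero1}).

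Suppose, for contradiction, that $\{\zeta_n\}_{n=1}^\infty$ admits a norm convergent subsequence $\{\zeta_{n_j}\}_{j=1}^\infty$. By Lemma \ref{lem:convergezero}, the limit of any such subsequence must be zero, so $\|\zeta_{n_j}\|\to 0$ as $j\to\infty$. In particular, $\liminf_{j\to\infty}\|\zeta_{n_j}\|=0$. On the other hand, Lemma \ref{lem:liminf} asserts that $\liminf_{n\to\infty}\|\zeta_n\|\geq 1/(2\pi)$, and the liminf along any subsequence is at least as large as the liminf along the full sequence. This forces $\liminf_{j\to\infty}\|\zeta_{n_j}\|\geq 1/(2\pi)>0$, contradicting the previous conclusion.

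There is no real obstacle here; the two lemmas were engineered precisely so that no subsequence can simultaneously satisfy both the positive lower bound on norms and the weak convergence to zero. The subsequent corollary asserting noncompactness of $P_+re^{-i\theta}P_-$ then follows because $\{\xi_n\}$ is bounded by Lemma \ref{lem:boundedsequence}, the operator $K$ is compact, and hence $\{(P_+re^{-i\theta}P_--K)\xi_n\}=\{\zeta_n\}$ would have a convergent subsequence if $P_+re^{-i\theta}P_-$ were compact. Invoking Lemma \ref{lem:compact-comms} then delivers noncompactness of $[F,re^{-i\theta}]$, completing the counterexample.
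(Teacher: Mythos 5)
Your argument is correct and is essentially identical to the paper's: assume a convergent subsequence exists, use Lemma \ref{lem:convergezero} to force its limit to be zero, and contradict this with the lower bound on $\liminf_n\|\zeta_n\|$ from Lemma \ref{lem:liminf}. No issues.
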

\begin{proof}
If $\zeta_n$ had a convergent subsequence $\{\zeta_{n_j}\}_{j=1}^\infty$, then $\zeta_{n_j}\rightarrow0$ by Lemma \ref{lem:convergezero}. But by Lemma \ref{lem:liminf}, $\|\zeta_{n_j}\|\nrightarrow0$, which is a contradiction.
\end{proof}
\begin{corl} The operator
$P_+re^{-i\theta}P_-$ is not compact.
\end{corl}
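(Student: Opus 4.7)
The plan is to combine the previous corollary (that $\{\zeta_n\}$ has no norm convergent subsequence) with the compactness of the explicit operator $K$ that was subtracted off, and then invoke the standard characterisation of compactness via bounded sequences.

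First, I would note that by Lemma \ref{lem:boundedsequence}, the sequence $\{\xi_n\}_{n=1}^\infty$ is bounded in $L^2(\mathbb{D},\mathbb{C}^2)$. By the preceding corollary, the sequence $\zeta_n = (P_+re^{-i\theta}P_- - K)\xi_n$ has no norm convergent subsequence. Since the image of a bounded sequence under a compact operator must have a convergent subsequence, this shows directly that the operator $P_+re^{-i\theta}P_- - K$ is not compact.

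Next, I would observe that $K = \sum_{k=1}^\infty \frac{1}{2\alpha_{0,k}}\Ket{1,1,k,+}\Bra{2,1,k,-}$ is a norm convergent sum of rank-one operators (since $\alpha_{0,k}\to\infty$, the coefficients are absolutely summable, or at the very least the tails converge in norm to zero), hence $K$ is compact. The key observation is then that the class of compact operators is closed under addition: if $P_+re^{-i\theta}P_-$ were compact, then $(P_+re^{-i\theta}P_- - K) + K = P_+re^{-i\theta}P_-$ would decompose as a sum of two compact operators, forcing $P_+re^{-i\theta}P_- - K$ to be compact, contradicting the previous paragraph. Therefore $P_+re^{-i\theta}P_-$ is not compact.

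There is no real obstacle here; the proof is a three-line assembly of the pieces already in place, and all of the genuine analytic content has been accomplished in Lemmas \ref{lem:liminf} and \ref{lem:convergezero} and the previous corollary. The only minor point worth stating explicitly is the compactness of $K$, which is immediate from its expression as a norm-convergent series of rank-one operators with coefficients tending to zero.
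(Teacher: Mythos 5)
Your proof is correct and essentially identical to the paper's: both combine the boundedness of $\{\xi_n\}$, the absence of convergent subsequences of $\{\zeta_n\}=\{(P_+re^{-i\theta}P_--K)\xi_n\}$, and the compactness of $K$, using that the compacts form an ideal (in particular a subspace). One small correction: the coefficients $\tfrac{1}{2\alpha_{0,k}}\sim\tfrac{1}{2\pi k}$ are \emph{not} absolutely summable, but your fallback justification is the right one --- since the kets $\Ket{1,1,k,+}$ and bras $\Bra{2,1,k,-}$ range over orthonormal families, the tail of the series has operator norm $\sup_{k>N}\tfrac{1}{2\alpha_{0,k}}\to 0$, so $K$ is a norm limit of finite-rank operators.
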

\begin{proof}
By Lemma \ref{lem:boundedsequence}, $\{\xi_n\}_{n=1}^\infty$ is bounded, but 
$\{(P_+re^{-i\theta}P_--K)\xi_n\}_{n=1}^\infty$ contains no convergent subsequence.
As $P_+re^{-i\theta}P_-$ differs from $P_+re^{-i\theta}P_--K$ by a compact operator, 
$P_+re^{-i\theta}P_-$ is not compact.
\end{proof}
In summary we have shown the following:
\begin{prop} The self-adjoint extension $\D_{P}$ 
of the closed symmetric operator $\D$ has compact resolvent, 
and for all $a\in C^\infty(\mathbb{D})$, the commutators $[\D_{P},a]$ are defined on 
${\rm dom}\,\D$, and are bounded on this dense subset. 
The bounded transform $F:=\D_{P}(1+\D_{P}^{2})^{-\frac{1}{2}}$ 
has the property that the commutator $[F,re^{-i\theta}]$ is not a 
compact operator. Therefore $(C(\mathbb{D}), L^{2}(\mathbb{D},\mathbb{C}^2), F)$ 
does not define a Fredholm module.
\end{prop}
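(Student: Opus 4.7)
The plan is to assemble the results already established in this subsection. First, the compact resolvent assertion is exactly Lemma \ref{Dsquare}: the spectrum of $\D_{P}^{2}$ was identified as $\{\alpha_{n,k}^{2}\}_{n\geq 0,\,k\geq 1}$ with $\alpha_{n,k}\to\infty$, whence $(1+\D_{P}^{2})^{-1/2}$ is compact.

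Next I would handle the commutator condition. Since $\D\subset\D_{P}\subset\D^{*}$, and it was already observed, using the identification $\dom\D^{*}=H^{1}(\mathbb{D},\C^{2})$, that for any smooth bounded $a$ on the disc the operator $a$ preserves both $\dom\D$ and $\dom\D^{*}$ with $[\D^{*},a]$ extending to a bounded operator, one has for $\xi\in\dom\D$ that $a\xi\in\dom\D\subset\dom\D_{P}$ and therefore
\[ [\D_{P},a]\xi=\D^{*}(a\xi)-a\D\xi=[\D^{*},a]\xi, \]
which is bounded on this dense subset.

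For the noncompactness of $[F,re^{-i\theta}]$ I would invoke Lemma \ref{lem:compact-comms}, applicable since $\D_{P}$ is self-adjoint with compact resolvent, to reduce the problem to showing that $P_{+}re^{-i\theta}P_{-}$ is not compact; this is precisely the preceding Corollary. Since $re^{-i\theta}$ together with the constant function $1$ generates $C(\mathbb{D})$ as a $C^{*}$-algebra by Stone--Weierstrass, and a Fredholm module requires $[F,a]$ to be compact for every $a$ in the algebra, the failure for this single generator is enough to conclude that $(C(\mathbb{D}),L^{2}(\mathbb{D},\C^{2}),F)$ is not a Fredholm module. All the genuine analytic work has already been carried out in the preceding lemmas --- in particular in the delicate lower bound $\liminf_{n}\|\zeta_{n}\|\geq 1/(2\pi)$ of Lemma \ref{lem:liminf} --- so no real obstacle remains; the proposition is a clean restatement of what has been proved.
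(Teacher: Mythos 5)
Your proposal is correct and follows essentially the same route as the paper, which likewise presents this proposition as a summary assembling Lemma \ref{Dsquare} for the compact resolvent, the inclusion $\dom\D\subset\dom\D_P\subset\dom\D^*$ for the boundedness of $[\D_P,a]$ on $\dom\D$, and Lemma \ref{lem:compact-comms} together with the preceding corollary on $P_+re^{-i\theta}P_-$ for the noncompactness of $[F,re^{-i\theta}]$. No gaps.
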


\end{document}